\newtheorem{remark}{Remark}
\newtheorem{example}{Example}
\newcommand{\vertiii}[1]{{\left\vert\kern-0.25ex\left\vert\kern-0.25ex\left\vert #1
    \right\vert\kern-0.25ex\right\vert\kern-0.25ex\right\vert}}
\title{Finite element method with the total stress variable for Biot's consolidation model}
\author{Wenya Qi\thanks{School of Mathematics and Statistics, Lanzhou University, Lanzhou 730000, China(qiwy16@lzu.edu.cn). The research of W.Q. was partially supported by China Scholarship Council, Grant Number: 201906180039 and  National Natural Science Foundation of China (Grant No.11471150).}
 \and Padmanabhan Seshaiyer
\thanks{Department of Mathematical Sciences, George Mason University, Fairfax, VA 22030, USA (pseshaiy@gmu.edu).}
 \and Junping Wang
\thanks{Division of Mathematical Sciences, National Science Foundation, Alexandria, VA 22314, USA(jwang@nsf.gov), and Department of Mathematical Sciences, George Mason University, Fairfax, VA 22030, USA. The research of J.W. was supported by the NSF IR/D program, while working at National Science Foundation. However, any opinion, finding, and conclusions or recommendations expressed in this material are those of the author and do not necessarily reflect the views of the National Science Foundation.}}
\begin{document}

\maketitle
%\linenumbers
\begin{abstract}
 In this work, semi-discrete and fully-discrete error estimates are derived for the Biot's consolidation model described using a three-field finite element formulation. The fields include displacements, total stress and pressure. The model is implemented using a backward Euler discretization in time for the fully-discrete scheme and validated for benchmark examples. Computational experiments presented verifies the convergence orders for the lowest order finite elements with discontinuous 
 and continuous finite element appropriation for the total stress.  
 \end{abstract}

\begin{keywords} Biot's consolidation model, the total stress, error estimate, fully-discrete, lowest finite elements.
\end{keywords}

\begin{AMS}
Primary 65N30; Secondary 65N50
\end{AMS}

\pagestyle{myheadings}

\section{Introduction}\label{sec:1}
Biot's fundamental equations for the soil consolidation process that describes the gradual adaptation of the soil to a load variation have been established in \cite{Biot1941}. The mechanism of consolidation phenomenon described using a linear isotropic model is identical with the process of squeezing water out of an elastic porous medium in many cases. This solid-fluid coupling was extended to general anisotropy case in \cite{Biot1955}. Such poroelasticity models have a lot applications in many areas including geomechanics \cite{Soltan2007}, medicine \cite{Nagashima1990}, biomechanics \cite{Swan2003}, reservoir engineering \cite{Lewis1993}. The Biot's consolidation models have also been used to combine transvascular and interstitial fluid movement with the mechanics of soft tissue in \cite{Netti1997}, which can be applied to improve drug delivery in solid tumors.
Existence, uniqueness, and regularity theory were developed in \cite{Showalter2000} for poroelasticity and quasi-static problem in thermoelasticity.
In  \cite{Leiderman2006}, experiments with finite element method for the model in \cite{Netti1997} have demonstrated the effects of fluid flow on the spatio-temporal patterns of soft-tissue elastic strain under a variety of physiological condition, which emphasized simulations relevant to a quasistatic elasticity imaging for the characterization of fluid flow in solid tumors. 

Biot's consolidation model has been considered by many researchers using finite element methods. In \cite{Yokoo1971}, a variational principle and the finite element method for a model with applications to a nonhomogeneous, anisotropic soil were developed. 
The fully discretization with backward Euler time discrete finite element method has been carried out and the existence and uniqueness were proved in \cite{zenisek1984}.
Moreover, the simplest Taylor-Hood finite elements were employed.
The stability and error estimates for the semi-discrete and fully-discrete finite element approximations were derived in \cite{Murad1992} and \cite{Murad1994}.
Decay functions and post processing technique also were employed to improve the pore pressure accuracy.
 With the displacement and pore pressure fields as unknowns, the short and long time behavior of spatially discrete finite element solutions have been studied in \cite{murad1996}. Asymptotic error estimates have been derived for both stable and unstable combinations of the finite element spaces.  
 For the coupled problem, a least squares mixed finite element method was presented and analyzed in \cite{Korsawe2005} with the unknowns fluid displacement, stress tensor,  flux, and  pressure. 
 In \cite{Phillips2007}, coupling of mixed and continuous Galerkin finite element methods  for pressure and displacements have been formulated deriving the convergence error estimates in time continuous setting.  Methods for coupling mixed and discontinuous Galerkin have been presented in \cite{Phillips2008}.
The error estimates for a fully-discrete stabilized discontinuous Galerkin method were obtained with the unknowns pressure and displacement in \cite{CHEN2013}.
In \cite{Wheeler2014}, a discretization method in irregular domains with general grids for discontinuous full tensor permeabilities was developed.
A new mixed finite element method for Biot's consolidation problem in four variables was proposed in \cite{sonyi2014} and later, a three field mixed  finite element which was free of pressure oscillations and Poisson locking has been proposed \cite{sonyi2017}. 
The priori error estimates that were robust for material parameters were provided in \cite{LEE2016} with a four-field mixed method formulation.
A three field finite element formulation with nonconforming finite element space for the displacements was considered in \cite{Hu2017}.
Based on the parameter dependent norms,  the parameter-robust stability  was established in \cite{Hong2018}, and parameter robust inf-sup stability and strong mass conservation were derived for three field mixed discontinuous Galerkin discretizations. 
A stabilized finite element method with equal order elements for the unknowns pressure and displacement was proposed in \cite{Chen2018} to reduce the effects of non-physical oscillations.
Combining the mixed method with symmetric interior penalty discontinuous Galerkin method obtained a H(div) conforming finite element method in \cite{Kanschat2018}. The method achieved strong mass conservation.

Moreover, in \cite{Oyarzua2016}, the total stress (or the soil pressure) expressed as a combination of the divergence of the velocity and pressure has been introduced coupling the solid and fluid robustly for Biot’s consolidation problem with the unknown displacement, pressure, and volumetric stress. Using a Fredholm argument for a static model, error estimates were derived independently of the Lam$\acute{e}$ constants for both continuous and discrete formulations.
A three field formulation of Biot's model with the total stress variable has also been proposed in \cite{Lee2017},  which developed a parameter-robust block diagonal preconditioner for the associated discrete systems. Then, in \cite{Lee2019}, 
a priori error estimates for semi-discrete scheme has been presented by introducing the total pressure variable for quasi-static multiple-network poroelasticity equations.
Our goal in this paper is to emphasize the time dependence of field variables in error estimates, so we choose to consider the fully-discrete scheme. Thus, using the total stress as a new variable for the three field formulation, we give the error estimates for semi-discrete and fully-discrete with backward Euler time discretization schemes. 

Let $\Omega$ be an open bounded polygonal or polyhedral domain in $\mathbb{R}^d,~d=2,3$. Biot's consolidation model is described  as follows: Find the displacement vector $\mathbf{u}$ and the fluid pressure $p$ in
\begin{align}
-\nabla \cdot(2\mu\varepsilon(\mathbf{u})+\lambda\nabla\cdot \mathbf{u} \mathbf{I}- p\mathbf{I})&=\mathbf{f},~~~~~~~~~~\mbox{in}~\Omega\times(0,\bar{T}], \label{eq:problem1.1}\\
\nabla\cdot(D_t\mathbf{u})-\nabla\cdot(k\nabla p)&=g,~~~~~~~~~~\mbox{in}~\Omega\times(0,\bar{T}], \label{eq:problem1.2}\\
\mathbf{u}=\mathbf{0}, ~~p&=0, ~~~~~~~~~~\mbox{on}~\Gamma_D\times(0,\bar{T}],\label{eq:boundarycondition1.3}\\
(2\mu\varepsilon(\mathbf{u})+\lambda\nabla\cdot \mathbf{u} \mathbf{I}- p\mathbf{I})\cdot \mathbf{n}&=\mathbf{\beta},~~~~~~~~~~\mbox{on}~\Gamma_N\times(0,\bar{T}],\label{eq:boundarycondition1.4}\\
(\kappa\nabla p)\cdot \mathbf{n}&=\mathbf{\gamma},~~~~~~~~~~\mbox{on}~\Gamma_N\times(0,\bar{T}],\label{eq:boundarycondition1.5}
\end{align}
where  $\Gamma_D$ and $\Gamma_N$ are disjoint closed subsets with $\Gamma_D\cup \Gamma_N=\partial \Omega$ and the Dirichlet  boundary $\vert \Gamma_D\vert\neq0$. Here, $\varepsilon(\mathbf{u})=\frac{1}{2}(\nabla\mathbf{u}+(\nabla\mathbf{u})^T)$ is the strain tensor expressed in terms of symmetrized gradient of displacements, $\kappa\in(0, \infty)$ is the permeability of the porous solid and parameters $\mu\in(0, \infty)$, $\lambda\in(0, \infty)$ are the elastic Lam$\acute{e}$ constants. The right hand side term $\mathbf{f}$ in (\ref{eq:problem1.1}) represents the density of the applied body forces, and  the source term $g$ in (\ref{eq:problem1.2}) represents a forced fluid extraction or injection. The outward unit normal vector on $\partial \Omega$ is denoted by $\mathbf{n}$. 

Next, we introduce the coupling between the solid and fluid using $q=-\lambda\nabla\cdot\mathbf{u}+p$, where $q$ is the total stress, $\mathbf{u}$ is the displacement and $p$ is the fluid  pressure \cite{Oyarzua2016, Lee2017}. This can be rewritten as 
\begin{equation}\label{eq:coupling}
\lambda^{-1}(q-p)+\nabla\cdot\mathbf{u}=0.
\end{equation}
For $x\in\Omega$, the initial conditions  are given by
\begin{equation}\label{initialcondition}
\begin{aligned}
\mathbf{u}(x,0)&=\varphi,\\
p(x,0)&=\phi.
\end{aligned}
\end{equation}
Hence, the initial condition for the total stress is $q(x, 0)=-\lambda\nabla\cdot\mathbf{\varphi}+\phi$.  Denote the spaces $[H^1_{0,D}(\Omega)]^d=\{\mathbf{v}\in [H^1(\Omega)]^d, ~\mathbf{v}=\mathbf{0} ~\mbox{on} ~\Gamma_D\}$
and $H^1_{0,D}(\Omega)=\{ v\in H^1(\Omega), ~v=0~\mbox{on} ~\Gamma_D\}$. 

Multiplying \eqref{eq:problem1.1}, \eqref{eq:coupling} and \eqref{eq:problem1.2} by test functions, integrating by parts and applying boundary conditions yield the following weak formulation: Find $\mathbf{u}\in[H^1_{0,D}(\Omega)]^d$, $q\in L^2(\Omega)$,  $p\in H^1_{0,D}(\Omega)$ such that
\begin{equation}\label{weakform_FEM1}
\begin{aligned}
2\mu(\varepsilon(\mathbf{u}), \varepsilon(\mathbf{v}))-(q,\nabla\cdot\mathbf{v})&=(\mathbf{f},\mathbf{v})+\langle \beta, \mathbf{v}\rangle_{\Gamma_N}, ~~~~~~\forall \mathbf{v}\in[H^1_{0,D}(\Omega)]^d,\\
(\lambda^{-1}(q-p),w_q)+(\nabla\cdot\mathbf{u},w_q)&=0,~~~~~~~~~~~~~~~~~~~~~~~~~\forall w_q\in L^2(\Omega),\\
-(\lambda^{-1}(q_t-p_t),w_p)+\kappa(\nabla p,\nabla w_p)&=(g, w_p)+\langle \gamma, w_p\rangle_{\Gamma_N},~~\forall w_p\in H^1_{0,D}(\Omega).\\
\end{aligned}
\end{equation}
In Section \ref{sec:2}, we present the semi-discrete and fully-discrete finite element formulation for system (\ref{weakform_FEM1}) with the unknown displacement, total stress and pressure and prove uniqueness of the solution for each of these schemes. Section \ref{sec:3} presents the derivation and analysis of the error estimates for both the semi-discrete and fully-discrete schemes. In Section \ref{sec:4} we present computational experiments on benchmark problems that validate the theoretical convergence rates with respect to mesh size $h$ and time step $\tau$.

In the paper, we denote the arbitrary constants by $\epsilon_i\geq0$,  where $i$ is positive integer and $C$ is a constant which is independent of time step $\tau$ and mesh size $h$. Let ${P}_{k}$ be the space of polynomials of degree less than or equal to $k$ in all variables. Moreover, let $\Vert\cdot\Vert$ be the norm in $L^2(\Omega)$
space and $\Vert\cdot\Vert_k$ be the norm in $H^k(\Omega)$ space. Denote the space-time space by $L^{k}(0,\bar{T}; V)$ for a Banach space $V$ (see details in \cite{Quarteroni2008}). 

\section{Finite Element Discretization and Uniqueness}\label{sec:2}
Let $\mathcal{T}_h$ be a regular and quasi-uniform triangulation of domain $\Omega$ into triangular or tetrahedron elements \cite{Brenner2008, Ciarlet1978}. For each element $T\in\mathcal{T}_h$, $h_T$ is its diameter and $h=\max_{T\in\mathcal{T}_h}h_T$ is the mesh size of triangulation $\mathcal{T}_h$.  We consider the following finite element spaces on $\mathcal{T}_h$,
 \begin{equation*}
\begin{aligned}
 \mathbf{U}_{h}:=&\{\mathbf{v}\in[H^1_{0,D}(\Omega)]^d\cap [C^0(\Omega)]^d: \mathbf{v}\mid_{T}\in[{P}_{k}(T)]^d, \forall T \in \mathcal{T}_{h}\},\\
 Z_{h}:=&\{q\in L^2(\Omega): q\mid_{T}\in{P}_{l}(T), \forall T \in \mathcal{T}_{h}\},\\
P_{h}:=&\{p\in H^1_{0,D}(\Omega)\cap C^0(\Omega): p\mid_{T}\in{P}_{k-1}(T), \forall T \in \mathcal{T}_{h}\},\\
\end{aligned}
\end{equation*}
where $k\geq2,~l\geq 0.$
In order to describe the initial conditions of discretization schemes, we define the following two projection operators.  Let us define the Stokes projection $Q_h^{\mathbf{u}}: [H^1_{0,D}(\Omega)]^d \rightarrow \mathbf{U}_h$ and 
$Q_h^q: L^{2}(\Omega) \rightarrow Z_h$ by
 \begin{equation}\label{stoke_proj}
\begin{aligned}
2\mu(\varepsilon(Q_h^{\mathbf{u}}\mathbf{u}), \varepsilon(\mathbf{v}))-(Q_h^q q,\nabla\cdot\mathbf{v})&=2\mu(\varepsilon(\mathbf{u}), \varepsilon(\mathbf{v}))-(q,\nabla\cdot\mathbf{v}), ~~~~~~\forall \mathbf{v}\in\mathbf{U}_h,\\
(w_q, \nabla\cdot Q_h^{\mathbf{u}}\mathbf{u})&=(w_q, \nabla\cdot\mathbf{u}),~~~~~~~~~~~~~~~~~~~~~~~~~~\forall w_q\in Z_h.
\end{aligned}
\end{equation}

Also the elliptic projection 
$Q_h^{p}: H^1_{0,D}(\Omega) \rightarrow P_{h}$ is defined with the following properties,
\begin{equation}\label{elliptic_proj}
\begin{aligned}
(\nabla Q_h^{p}{p},\nabla{w_p})&= (\nabla {p},\nabla{w_p}), ~~\forall w_p\in {P}_{h}.
 \end{aligned}
\end{equation}
Hence, given a suitable approximation of initial conditions $\mathbf{u}_h(0)=Q_h^{\mathbf{u}}\varphi$, $q_h(0)=Q_h^q(-\lambda\nabla\cdot\varphi+\phi)$ and  $p_h(0)=Q_h^p\phi$, the semi-discrete scheme corresponding to a three field formulation \eqref{weakform_FEM1} is for all $t\in[0, \bar{T}]$,  to seek $\mathbf{u}_h(t)\in \mathbf{U}_{h}$, $q_h(t)\in Z_{h}$, $p_h(t)\in P_{h}$ such that
\begin{equation}\label{FEM1_semi}
\begin{aligned}
2\mu(\varepsilon(\mathbf{u}_h), \varepsilon(\mathbf{v}))-(q_h,\nabla\cdot\mathbf{v})&=(\mathbf{f},\mathbf{v})+\langle \beta, \mathbf{v}\rangle_{\Gamma_N}, ~~~~~\forall \mathbf{v}\in\mathbf{U}_{h},\\
(\lambda^{-1}(q_{h}-p_{h}),w_q)+(\nabla\cdot\mathbf{u}_{h},w_q)&=0,~~~~~~~~~~~~~~~~~~~~~~~~~\forall w_q\in Z_{h},\\
-(\lambda^{-1}(q_{ht}-p_{ht}),w_p)+\kappa(\nabla p_h,\nabla w_p)&=(g, w_p)+\langle \gamma, w_p\rangle_{\Gamma_N},~~\forall w_p\in P_{h}.\\
\end{aligned}
\end{equation}
To obtain a fully-discrete formulation, we denote time step by $\tau$, and $t^n= n \tau$, where $n$ is non-negative integer. Thus, given a  suitable approximation of initial conditions $\mathbf{u}
^0=Q_h^{\mathbf{u}}\varphi$,  $q^0=Q_h^q(-\lambda\nabla\cdot\varphi+\phi)$ and $p^0=Q_h^p\phi$,  the fully-discrete scheme with backward Euler time discretization corresponding to the three field formulation \eqref{weakform_FEM1} is to find  $\mathbf{u}^n\in \mathbf{U}_{h}$, $q^n\in Z_{h}$, $p^n\in P_{h}$ such that
\begin{equation}\label{FEM1_fully}
\begin{aligned}
2\mu(\varepsilon(\mathbf{u}^n), \varepsilon(\mathbf{v}))-(q^n,\nabla\cdot\mathbf{v})&=(\mathbf{f}^n,\mathbf{v})+\langle \beta^n, \mathbf{v}\rangle_{\Gamma_N}, ~~~~~~~\forall \mathbf{v}\in\mathbf{U}_{h},\\
(\lambda^{-1}( q^n-p^n),w_q)+(\nabla\cdot {\mathbf{u}^n},w_q)&=0,~~~~~~~~~~~~~~~~~~~~~~~~~~~~~\forall w_q\in Z_{h},\\
-(\lambda^{-1}(\bar{\partial}_t q^n-\bar{\partial}_t p^n),w_p)+\kappa(\nabla p^n,\nabla w_p)&=(g^n, w_p)+\langle \gamma^n, w_p\rangle_{\Gamma_N},~~~\forall w_p\in P_{h},\\
\end{aligned}
\end{equation}
where $\bar{\partial}_t{\mathbf{u}^n}: =\displaystyle\frac{\mathbf{u}^n-\mathbf{u}^{n-1}}{\tau}$ and $f^n: =f(x, t^n),~x\in\Omega$.

Then, we present an inequality, which will be useful in the uniqueness and convergence analysis.
\begin{lemma}\label{korn_leml}(Korn's inequality) \cite{Nitsche1981, Brenner2004} For each $\mathbf{u}\in[H^1(\Omega)]^d$, there exists a positive constant $C$ such that
\begin{equation}\label{korn_inequa}
\vert \mathbf{u}\vert_1\leq C(\Vert\varepsilon(\mathbf{u})\Vert+\Vert \mathbf{u} \Vert).
\end{equation}
\end{lemma}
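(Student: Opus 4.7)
The plan is to treat this as the classical second Korn inequality on a bounded Lipschitz domain and argue as in the references cited. The underlying algebraic identity is
\begin{equation*}
|\varepsilon(\mathbf{u})|^2 = \tfrac{1}{2}|\nabla\mathbf{u}|^2 + \tfrac{1}{2}\sum_{i,j}\partial_i u_j\,\partial_j u_i,
\end{equation*}
so that for $\mathbf{u}\in [C_c^\infty(\Omega)]^d$ an integration by parts gives $\|\nabla\mathbf{u}\|^2 = 2\|\varepsilon(\mathbf{u})\|^2 - \|\nabla\cdot\mathbf{u}\|^2 \le 2\|\varepsilon(\mathbf{u})\|^2$. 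The real difficulty is that here $\mathbf{u}$ is only in $[H^1(\Omega)]^d$ with no boundary conditions, so the boundary terms produced by this integration by parts do not vanish and the $\|\mathbf{u}\|$ term on the right must absorb them.

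My first approach would be the distributional route. Using the commutation identity
\begin{equation*}
\partial_k\partial_j u_i = \partial_j\varepsilon_{ki}(\mathbf{u}) + \partial_k\varepsilon_{ji}(\mathbf{u}) - \partial_i\varepsilon_{jk}(\mathbf{u}),
\end{equation*}
each component $\partial_j u_i$ is a distribution in $H^{-1}(\Omega)$ whose distributional derivatives also lie in $H^{-1}(\Omega)$, with $H^{-1}$-norms bounded by $\|\mathbf{u}\|$ and $\|\varepsilon(\mathbf{u})\|$. I would then invoke Ne\v{c}as/Lions' lemma on the Lipschitz domain $\Omega$, which states that any $v\in\mathscr{D}'(\Omega)$ with $v\in H^{-1}(\Omega)$ and $\nabla v\in [H^{-1}(\Omega)]^d$ actually lies in $L^2(\Omega)$ with $\|v\|\le C(\|v\|_{H^{-1}}+\|\nabla v\|_{H^{-1}})$. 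Applied to $v=\partial_j u_i$, this yields $\|\partial_j u_i\|\le C(\|\mathbf{u}\|+\|\varepsilon(\mathbf{u})\|)$, which is exactly \eqref{korn_inequa} after summing in $i,j$.

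A second, more self-contained route would be a compactness/contradiction argument. Assume the inequality fails and extract a sequence $\mathbf{u}_n\in[H^1(\Omega)]^d$ with $|\mathbf{u}_n|_1=1$ but $\|\varepsilon(\mathbf{u}_n)\|+\|\mathbf{u}_n\|\to 0$. Boundedness in $H^1$ plus Rellich--Kondrachov gives a subsequence converging weakly in $H^1$ and strongly in $L^2$ to some $\mathbf{u}$; lower semicontinuity forces $\mathbf{u}=\mathbf{0}$ and $\varepsilon(\mathbf{u})=\mathbf{0}$. To reach a contradiction with $|\mathbf{u}_n|_1=1$, one must upgrade the weak $H^1$ convergence to strong convergence; this is precisely where the Lions lemma step above is needed, showing that $\{\mathbf{u}_n\}$ is Cauchy in $H^1$ because $\{\varepsilon(\mathbf{u}_n)\}$ is Cauchy in $L^2$ and $\{\mathbf{u}_n\}$ is Cauchy in $L^2$.

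The main obstacle in either route is clearly the Ne\v{c}as/Lions $H^{-1}$-regularity lemma on Lipschitz domains, which is a nontrivial ingredient; once it is accepted, the rest is bookkeeping with the symmetric-gradient identity and standard Sobolev embedding. Since the statement is invoked only as a tool later in the paper and the authors explicitly reference \cite{Nitsche1981, Brenner2004}, I would present the above outline and defer the technical core of Ne\v{c}as' lemma to those references.
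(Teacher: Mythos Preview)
Your outline is correct and follows the standard proofs of the second Korn inequality found in the cited references; both the Ne\v{c}as/Lions distributional route and the compactness argument are valid. However, there is nothing to compare against: the paper does not supply a proof of this lemma. It is stated with the citations \cite{Nitsche1981, Brenner2004} and then used as a black box in the uniqueness arguments that follow. So your proposal goes well beyond what the authors do, which is simply to quote the result.
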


Next we use Lemma \ref{korn_leml} to prove the uniqueness for semi-discrete (\ref{FEM1_semi}) and fully-discrete (\ref{FEM1_fully}) schemes.
\begin{theorem}
For each $t\in (0,\bar{T} ]$, the semi-discrete scheme \eqref{FEM1_semi} has a unique solution.
\end{theorem}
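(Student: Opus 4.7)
The plan is a linearity-plus-energy argument. By linearity of \eqref{FEM1_semi}, the difference $(\mathbf{u}_h,q_h,p_h) := (\mathbf{u}_h^{(1)}-\mathbf{u}_h^{(2)},\,q_h^{(1)}-q_h^{(2)},\,p_h^{(1)}-p_h^{(2)})$ of any two solutions satisfies the homogeneous version of \eqref{FEM1_semi} (with $\mathbf{f}=\mathbf{0}$, $\beta=\mathbf{0}$, $g=0$, $\gamma=0$) and has zero initial data. It therefore suffices to show that such a homogeneous solution vanishes identically on $[0,\bar T]$. Since the three unknowns live in fixed finite-dimensional subspaces, they are smooth functions of $t$, and differentiating the algebraic coupling in the second equation with respect to $t$ is legitimate.

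The crux is to find a combination of test functions that turns the cross terms of the coupled differential-algebraic system into a clean time derivative of a positive energy. I would (i) test the first equation with $\mathbf{v}=\mathbf{u}_{ht}$, (ii) differentiate the second equation in $t$ and then test with $w_q=q_h$, and (iii) test the third equation with $w_p=p_h$. Adding (i) and (ii) cancels the divergence cross term $(q_h,\nabla\cdot\mathbf{u}_{ht})$ and yields
\begin{equation*}
\mu\,\frac{d}{dt}\|\varepsilon(\mathbf{u}_h)\|^2 + \lambda^{-1}(q_{ht}-p_{ht},\,q_h)=0.
\end{equation*}
Adding (iii) then combines the $\lambda^{-1}$ terms into $\lambda^{-1}(q_{ht}-p_{ht},\,q_h-p_h)=\frac{1}{2\lambda}\frac{d}{dt}\|q_h-p_h\|^2$, producing the energy identity
\begin{equation*}
\mu\,\frac{d}{dt}\|\varepsilon(\mathbf{u}_h)\|^2 + \frac{1}{2\lambda}\,\frac{d}{dt}\|q_h-p_h\|^2 + \kappa\|\nabla p_h\|^2 = 0.
\end{equation*}

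Integrating in time from $0$ to $t$ and using the vanishing initial data yields $\varepsilon(\mathbf{u}_h(t))=0$, $q_h(t)=p_h(t)$, and $\nabla p_h\equiv 0$ on $(0,t)$. Since $p_h(t)\in H^1_{0,D}(\Omega)$ with $|\Gamma_D|\neq 0$, Poincar\'e's inequality forces $p_h(t)\equiv 0$, and therefore $q_h(t)\equiv 0$. Finally, Korn's inequality from Lemma~\ref{korn_leml} applied to $\mathbf{u}_h(t)\in [H^1_{0,D}(\Omega)]^d$, together with the homogeneous Dirichlet condition on $\Gamma_D$ (which rules out nontrivial rigid-body motions vanishing on a set of positive boundary measure), gives $\mathbf{u}_h(t)\equiv\mathbf{0}$.

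The main obstacle is the differential-algebraic character of \eqref{FEM1_semi}: the first two equations are purely algebraic while only the third carries a time derivative, so uniqueness cannot be read off from a single Gronwall inequality. The decisive trick is to differentiate the algebraic coupling in time and pair it with the elasticity equation tested against $\mathbf{u}_{ht}$; this converts the cross terms into a perfect derivative of $\|q_h-p_h\|^2$, which is precisely what is needed for the energy identity to close in a parameter-robust way.
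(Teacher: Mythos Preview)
Your proof is correct and follows essentially the same approach as the paper's own argument: differentiate the second (algebraic) equation in time, test with $(\mathbf{u}_{ht},\,q_h,\,p_h)$, and add to obtain the energy identity $\mu\frac{d}{dt}\|\varepsilon(\mathbf{u}_h)\|^2+\frac{\lambda^{-1}}{2}\frac{d}{dt}\|q_h-p_h\|^2+\kappa\|\nabla p_h\|^2=0$, then integrate and invoke Korn's inequality. If anything, your write-up is slightly more explicit than the paper in separating the Poincar\'e argument for $p_h$ from the Korn argument for $\mathbf{u}_h$.
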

\begin{proof}
We need to prove that the homogeneous problem of \eqref{FEM1_semi} has only the trivial solution. Taking the time derivative of the second equation in \eqref{FEM1_semi},
the homogeneous problem is rewritten as seeking $\mathbf{u}_h\in \mathbf{U}_{h}$, $q_h\in Z_{h}$, $p_h\in P_{h}$ with $\mathbf{u}_h(0)=0$, $q_h(0)=0$, $p_h(0)=0$ such that
\begin{equation}\label{FEM1_semi_homo}
\begin{aligned}
2\mu(\varepsilon(\mathbf{u}_h), \varepsilon(\mathbf{v}))-(q_h,\nabla\cdot\mathbf{v})&=0, ~~~~~\forall \mathbf{v}\in\mathbf{U}_{h},\\
(\lambda^{-1}(q_{ht}-p_{ht}),w_q)+(\nabla\cdot\mathbf{u}_{ht},w_q)&=0,~~~~\forall w_q\in Z_{h},\\
-(\lambda^{-1}(q_{ht}-p_{ht}),w_p)+\kappa(\nabla p_h,\nabla w_p)&=0,~~~~\forall w_p\in P_{h}.\\
\end{aligned}
\end{equation}
Using the test functions $\mathbf{v}=\mathbf{u}_{ht}$, $w_q=q_h$ and $w_p=p_h$ in \eqref{FEM1_semi_homo} and simplifying, we can derive the following identity 
$$\mu\frac{d}{dt}\Vert\varepsilon(\mathbf{u}_h)\Vert^2+\frac{\lambda^{-1}}{2}\frac{d}{dt}\Vert q_h-p_h\Vert^2+\kappa\Vert\nabla p_h\Vert^2=0.$$
Integrating the above identity over $(0, t)$, one finds
$$\mu\Vert\varepsilon(\mathbf{u}_h(t))\Vert^2+\frac{\lambda^{-1}}{2}\Vert q_h(t)-p_h(t)\Vert^2+\kappa\int_0^t\Vert\nabla p_h\Vert^2ds=0.$$
Therefore, with the conditions $\mu\in(0, \infty)$,   $\lambda\in(0, \infty)$, $\kappa\in(0, \infty)$, we have
$$\Vert\varepsilon(\mathbf{u}_h(t))\Vert=0,~ \Vert q_h(t)-p_h(t)\Vert=0~ \mbox{and}~\Vert\nabla p_h\Vert=0.$$ Then,
using Korn's inequality from Lemma \ref{korn_leml} when $\vert\Gamma_D\vert\neq 0$ leads to $\mathbf{u}_h(t)=0, q_h(t)=0, p_h(t)=0$.
\end{proof}
\begin{theorem}
For $t^N\in(0,\bar{T}]$,  the fully-discrete scheme \eqref{FEM1_fully} has  a unique solution.
\end{theorem}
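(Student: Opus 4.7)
Since at each time level the scheme \eqref{FEM1_fully} is a finite-dimensional linear system for $(\mathbf{u}^n,q^n,p^n)$, uniqueness reduces to showing that the homogeneous problem (zero data $\mathbf{f}^n,g^n,\beta^n,\gamma^n$ and vanishing previous step) has only the trivial solution. I would proceed by induction on $n$, the base case $n=0$ being immediate since $\mathbf{u}^0,q^0,p^0$ are projections of the initial data $\varphi,\phi$, which vanish in the homogeneous problem.

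For the inductive step, assume $\mathbf{u}^{n-1}=q^{n-1}=p^{n-1}=0$, so that $\bar{\partial}_t q^n=q^n/\tau$ and $\bar{\partial}_t p^n=p^n/\tau$. Mirroring the semi-discrete argument from the preceding theorem, I would test the three equations of \eqref{FEM1_fully} with $\mathbf{v}=\mathbf{u}^n$, $w_q=q^n$ and $w_p=\tau p^n$, respectively. The divergence cross term $(q^n,\nabla\cdot\mathbf{u}^n)$ produced by the first test cancels with $(\nabla\cdot\mathbf{u}^n,q^n)$ from the second; the $1/\tau$ coming from the backward difference in the third equation is absorbed by the scaling of $w_p$; and the remaining scalar products combine as $(q^n-p^n,q^n)-(q^n-p^n,p^n)=\Vert q^n-p^n\Vert^2$. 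Summing the three tested identities therefore yields the positive-definite energy equality
$$2\mu\Vert\varepsilon(\mathbf{u}^n)\Vert^2+\lambda^{-1}\Vert q^n-p^n\Vert^2+\tau\kappa\Vert\nabla p^n\Vert^2=0.$$
Since $\mu$, $\lambda^{-1}$, $\kappa$ and $\tau$ are all strictly positive, each term must vanish: $\varepsilon(\mathbf{u}^n)=0$, $q^n=p^n$ and $\nabla p^n=0$. Korn's inequality (Lemma \ref{korn_leml}) together with the Dirichlet condition on $\Gamma_D$ (which is admissible because $|\Gamma_D|\neq 0$) forces $\mathbf{u}^n=0$, Poincar\'e's inequality on $H^1_{0,D}(\Omega)$ gives $p^n=0$, and then $q^n=p^n=0$, closing the induction.

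The only real subtlety is the scaling $w_p=\tau p^n$: with the naive choice $w_p=p^n$ the backward difference produces a $1/\tau$ prefactor which cannot be combined cleanly with the $O(1)$ terms from the elasticity and coupling equations. Once the $\tau$-weighted test function is in place, the proof is a direct fully-discrete analog of the semi-discrete identity $\mu\tfrac{d}{dt}\Vert\varepsilon(\mathbf{u}_h)\Vert^2+\tfrac{\lambda^{-1}}{2}\tfrac{d}{dt}\Vert q_h-p_h\Vert^2+\kappa\Vert\nabla p_h\Vert^2=0$ used for the semi-discrete uniqueness theorem, and no further machinery is needed.
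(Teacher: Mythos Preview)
Your argument is correct. The inductive reduction to a single time step with vanishing previous data, together with the test functions $\mathbf{v}=\mathbf{u}^n$, $w_q=q^n$, $w_p=\tau p^n$, produces exactly the identity you state, and the conclusion via Korn and Poincar\'e is sound.

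The paper takes a different route: instead of induction, it rewrites the coupling equation in time-differenced form, tests the first equation with $\mathbf{v}=\tau\bar{\partial}_t\mathbf{u}^n$ (rather than $\mathbf{u}^n$), and uses the elementary inequality $(a,a-b)\geq\tfrac{1}{2}(\Vert a\Vert^2-\Vert b\Vert^2)$ to obtain a telescoping estimate that is then summed from $n=1$ to $N$. This yields
\[
\mu\Vert\varepsilon(\mathbf{u}^N)\Vert^2+\tfrac{\lambda^{-1}}{2}\Vert q^N-p^N\Vert^2+\kappa\tau\sum_{n=1}^N\Vert\nabla p^n\Vert^2\leq 0,
\]
which is an inequality rather than your exact equality. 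Your approach is more elementary and direct for the uniqueness statement itself: the induction removes all cross-step coupling and gives a clean positive-definite identity at one step. The paper's approach, by contrast, is the discrete energy-stability template that is reused verbatim in the error analysis of Section~\ref{sec:3.2} (compare the choices $\mathbf{v}=\tau\bar{\partial}_t\xi_{\mathbf{u}}^n$, $w_q=\tau\xi_q^n$, $w_p=\tau\xi_p^n$ there), so its value lies in previewing the machinery rather than in economy for this particular theorem.
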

\begin{proof}
Similar to the semi-discrete, with
$\mathbf{u}^0=0$, $q^0=0$ and $p^0=0$, we rewrite the second equation of \eqref{FEM1_fully} and consider the homogeneous problem for fully-discrete scheme \eqref{FEM1_fully} is  to find $\mathbf{u}^n\in \mathbf{U}_{h}$, $q^n\in Z_{h}$, $p^n\in P_{h}$ such that
\begin{equation}\label{FEM1_fully_homo}
\begin{aligned}
2\mu(\varepsilon(\mathbf{u}^n), \varepsilon(\mathbf{v}))-(q^n,\nabla\cdot\mathbf{v})&=0, ~~~~~~~\forall \mathbf{v}\in\mathbf{U}_{h},\\
(\lambda^{-1}(\bar{\partial}_t q^n-\bar{\partial}_t p^n),w_q)+(\nabla\cdot\bar{\partial}_t{\mathbf{u}^n},w_q)&=0,~~~~~~\forall w_q\in Z_{h},\\
-(\lambda^{-1}(\bar{\partial}_t q^n-\bar{\partial}_t p^n),w_p)+\kappa(\nabla p^n,\nabla w_p)&=0,~~~~~~\forall w_p\in P_{h}.\\
\end{aligned}
\end{equation}
Choosing $\mathbf{v}=\tau\bar{\partial}_t\mathbf{u}^n$, $w_q=\tau q^n$ and $w_p=\tau p^n$, equation (\ref{FEM1_fully_homo}) can be simplified to 
$$\mu\Vert\varepsilon(\mathbf{u}^n)\Vert^2-\mu\Vert\varepsilon(\mathbf{u}^{n-1})\Vert^2+\frac{\lambda^{-1}}{2}\Vert q^n-p^n\Vert^2
-\frac{\lambda^{-1}}{2}\Vert q^{n-1}-p^{n-1}\Vert^2+\kappa\tau\Vert\nabla p^n\Vert^2\leq 0.$$
Here, we have used the inequality 
$$(\varepsilon(\mathbf{u}^n), \varepsilon(\mathbf{u}^n)-\varepsilon(\mathbf{u}^{n-1}))
\geq\frac{1}{2}(\Vert\varepsilon(\mathbf{u}^n)\Vert^2-\Vert\varepsilon(\mathbf{u}^{n-1})\Vert^2).$$
Summing over $n$ from $1$ to $N$, it follows that
\begin{equation*}
\begin{aligned}
\mu\Vert\varepsilon(\mathbf{u}^N)\Vert^2+\frac{\lambda^{-1}}{2}\Vert q^N-p^N\Vert^2+
\kappa\tau\sum_{n=1}^N\Vert\nabla p^n\Vert^2\leq 0.
\end{aligned}
\end{equation*}
Note that the assumptions $\mu\in(0, \infty)$,   $\lambda\in(0, \infty)$, $\kappa\in(0, \infty)$, $\vert\Gamma_D\vert\neq 0$ and using Korn's inequality \eqref{korn_inequa} from Lemma \ref{korn_leml}, 
we have $\mathbf{u}^N=0$, $p^N=0$ and $q^N=0$.
\end{proof}
\section{Error Estimates}\label{sec:3}
In order to derive the error estimates, we first show the inf-sup condition for space pair $( [H^1_{0,D}(\Omega)]^d, L^2(\Omega))$  with $\vert\Gamma_N\vert>0$. Denote $b(\mathbf{v}, w_q):=(w_q, \nabla\cdot\mathbf{v})$.  For each $w_q\in L^2(\Omega)$, there exists $\mathbf{v}\in [H^1_{0,D}(\Omega)]^d$ satisfying $w_q=\nabla\cdot\mathbf{v}$ and $\vert\mathbf{v}\vert_1\leq C\Vert w_q\Vert$. Thus, we can deduce that for $\forall w_q\in L^2(\Omega)$\cite{Girault1979, Oyarzua2016}
\begin{equation*}
\begin{aligned}
\sup_{0\neq\mathbf{v}\in [H^1_{0,D}(\Omega)]^d}\frac{b(\mathbf{v}, w_q)}{\vert\mathbf{v}\vert_1}\geq C\Vert w_q\Vert.
\end{aligned}
\end{equation*}
Since  not all the discrete finite element spaces meet the inf-sup condition,  we assume that the space pair $ (\mathbf{U}_h, Z_h)$ satisfy the inf-sup condition, i.e.  there exists a positive constant such that for $\forall w_{q_h}\in Z_h$
\begin{equation}\label{inf-sup-discrete}
\begin{aligned}
\sup_{0\neq\mathbf{v}_h\in\mathbf{U}_h}\frac{b(\mathbf{v}_h, w_{q_h})}{\vert\mathbf{v_h}\vert_1}\geq C\Vert w_{q_h}\Vert.
\end{aligned}
\end{equation}

\subsection{ Error estimate for the semi-discrete scheme}\label{sec:3.1}
For the semi-discrete scheme, we denote the error in displacement by $e_h^{\mathbf{u}}=\mathbf{u}-\mathbf{u}_h=\eta_h^{\mathbf{u}}+\xi_h^{\mathbf{u}}$, where $\eta_h^{\mathbf{u}}=\mathbf{u}-Q_h^{\mathbf{u}} \mathbf{u}$ and
$\xi_h^{\mathbf{u}}=Q_h^{\mathbf{u}} \mathbf{u}-\mathbf{u}_h$. Similarly, we decompose the errors of the total stress and pressure into two parts, respectively,  i.e. $e_h^q=\eta_h^q+\xi_h^q$ and $e_h^p=\eta_h^p+\xi_h^p$. 
\begin{theorem} \label{FEM1_th1-semi}
Assume that the inf-sup condition \eqref{inf-sup-discrete} is satisfied for the space pair $(\mathbf{U}_h, Z_h)$. Let $(\mathbf{u},~q,~p)$ and $(\mathbf{u}_h,~q_h,~p_h)$ be the solutions of \eqref{weakform_FEM1} and \eqref{FEM1_semi} respectively. Then there exists a constant such that for each $t\in(0,\bar{T}]$
\begin{equation}\label{FEM1_semi_erreq3.0}
\begin{aligned}
 &\mu\Vert\varepsilon(e_h^{\mathbf{u}}(t))\Vert^2+\Vert e_h^q(t)\Vert^2\leq C\Big(\mu\Vert\varepsilon(\eta_h^{\mathbf{u}}(t))\Vert^2+\Vert \eta_h^q(t)\Vert^2
 +\int_0^t \Vert \eta^q_{ht}-\eta^p_{ht}\Vert^2 ds\Big),
\end{aligned}
\end{equation}
and 
\begin{equation}\label{FEM1_semi_erreq3.1}
\begin{aligned}
&\kappa\Vert\nabla e_h^p(t)\Vert^2
\leq C\Big(\kappa\Vert\nabla \eta_h^p(t)\Vert^2+\int_0^t \Vert \eta^q_{ht}-\eta^p_{ht}\Vert^2 ds\Big).
\end{aligned}
\end{equation}
\end{theorem}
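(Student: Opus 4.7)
The plan is to derive error equations by subtracting the semi-discrete scheme \eqref{FEM1_semi} from the continuous weak formulation \eqref{weakform_FEM1} with test functions restricted to $\mathbf{U}_h\times Z_h\times P_h$, and then use the defining properties of the Stokes projection $(Q_h^{\mathbf{u}},Q_h^q)$ and the elliptic projection $Q_h^p$ to eliminate the terms involving $\varepsilon(\eta_h^{\mathbf{u}})$, $\nabla\cdot\eta_h^{\mathbf{u}}$, and $\nabla\eta_h^p$. The resulting equations for the discrete remainders $\xi_h^{\mathbf{u}}$, $\xi_h^q$, $\xi_h^p$ have zero initial data and take the form
\begin{equation*}
\begin{aligned}
2\mu(\varepsilon(\xi_h^{\mathbf{u}}),\varepsilon(\mathbf{v}))-(\xi_h^q,\nabla\cdot\mathbf{v})&=0,\\
(\lambda^{-1}(\xi_h^q-\xi_h^p),w_q)+(\nabla\cdot\xi_h^{\mathbf{u}},w_q)&=-(\lambda^{-1}(\eta_h^q-\eta_h^p),w_q),\\
-(\lambda^{-1}(\xi_{ht}^q-\xi_{ht}^p),w_p)+\kappa(\nabla\xi_h^p,\nabla w_p)&=(\lambda^{-1}(\eta_{ht}^q-\eta_{ht}^p),w_p),
\end{aligned}
\end{equation*}
for every $(\mathbf{v},w_q,w_p)\in\mathbf{U}_h\times Z_h\times P_h$.

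For \eqref{FEM1_semi_erreq3.0} I would differentiate the second equation in time and then choose $\mathbf{v}=\xi_{ht}^{\mathbf{u}}$, $w_q=\xi_h^q$, $w_p=\xi_h^p$. Summing the three identities makes the cross term $(\nabla\cdot\xi_{ht}^{\mathbf{u}},\xi_h^q)$ cancel against $-(\xi_h^q,\nabla\cdot\xi_{ht}^{\mathbf{u}})$, while the $\lambda^{-1}$ contributions assemble into $\tfrac12\frac{d}{dt}\Vert\xi_h^q-\xi_h^p\Vert^2$, producing
\begin{equation*}
\mu\frac{d}{dt}\Vert\varepsilon(\xi_h^{\mathbf{u}})\Vert^2+\frac{\lambda^{-1}}{2}\frac{d}{dt}\Vert\xi_h^q-\xi_h^p\Vert^2+\kappa\Vert\nabla\xi_h^p\Vert^2=-\lambda^{-1}(\eta_{ht}^q-\eta_{ht}^p,\xi_h^q-\xi_h^p).
\end{equation*}
Integration over $(0,t)$, Young's inequality on the right-hand side, and Gronwall's lemma give $\mu\Vert\varepsilon(\xi_h^{\mathbf{u}}(t))\Vert^2+\Vert\xi_h^q(t)-\xi_h^p(t)\Vert^2\leq C\int_0^t\Vert\eta_{ht}^q-\eta_{ht}^p\Vert^2\,ds$. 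Upgrading the control of the combination $\xi_h^q-\xi_h^p$ to control of $\xi_h^q$ alone requires the discrete inf-sup condition \eqref{inf-sup-discrete}: from the first error equation, $\Vert\xi_h^q\Vert\leq C\sup_{\mathbf{v}_h}(\xi_h^q,\nabla\cdot\mathbf{v}_h)/\vert\mathbf{v}_h\vert_1\leq C\mu\Vert\varepsilon(\xi_h^{\mathbf{u}})\Vert$, so $\Vert\xi_h^q\Vert$ inherits the same decay. A triangle inequality in $e_h^{\bullet}=\eta_h^{\bullet}+\xi_h^{\bullet}$ then completes \eqref{FEM1_semi_erreq3.0}.

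For \eqref{FEM1_semi_erreq3.1} the previous identity only controls $\int_0^t\kappa\Vert\nabla\xi_h^p\Vert^2\,ds$, an integral in time, not the pointwise quantity $\kappa\Vert\nabla\xi_h^p(t)\Vert^2$. I would therefore repeat the energy procedure with time-differentiated test quantities: differentiate the first and second error equations in time and test with $\mathbf{v}=\xi_{ht}^{\mathbf{u}}$ and $w_q=\xi_{ht}^q$ respectively, and test the third equation with $w_p=\xi_{ht}^p$. Summing the three produces
\begin{equation*}
2\mu\Vert\varepsilon(\xi_{ht}^{\mathbf{u}})\Vert^2+\lambda^{-1}\Vert\xi_{ht}^q-\xi_{ht}^p\Vert^2+\frac{\kappa}{2}\frac{d}{dt}\Vert\nabla\xi_h^p\Vert^2=-\lambda^{-1}(\eta_{ht}^q-\eta_{ht}^p,\xi_{ht}^q-\xi_{ht}^p).
\end{equation*}
Young's inequality absorbs the $\xi_{ht}^q-\xi_{ht}^p$ factor into the left-hand side, and integration over $(0,t)$ together with $\nabla\xi_h^p(0)=0$ yields $\kappa\Vert\nabla\xi_h^p(t)\Vert^2\leq C\int_0^t\Vert\eta_{ht}^q-\eta_{ht}^p\Vert^2\,ds$; a final triangle inequality gives \eqref{FEM1_semi_erreq3.1}. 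The main obstacle is identifying the precise choice of test functions that makes the mixed $\nabla\cdot\xi_{ht}^{\mathbf{u}}$ term cancel while forcing the $\lambda^{-1}$ pieces to reassemble into an exact time derivative of $\Vert\xi_h^q-\xi_h^p\Vert^2$. The two estimates demand genuinely different pairings (tested against $\xi_h^{\bullet}$ versus $\xi_{ht}^{\bullet}$), and the $L^2$-control of the total stress is only recoverable through the auxiliary inf-sup step, which is the reason the hypothesis \eqref{inf-sup-discrete} is imposed.
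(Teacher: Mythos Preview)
Your proposal is correct and follows essentially the same route as the paper's proof: the same error equations derived via the Stokes and elliptic projections, the same test-function choices $(\xi_{ht}^{\mathbf{u}},\xi_h^q,\xi_h^p)$ for the first estimate followed by Gronwall and the inf-sup step to recover $\Vert\xi_h^q\Vert$, and then the time-differentiated first equation tested with $(\xi_{ht}^{\mathbf{u}},\xi_{ht}^q,\xi_{ht}^p)$ for the pointwise bound on $\Vert\nabla\xi_h^p(t)\Vert$. The only cosmetic difference is that the paper writes the second error equation already in time-differentiated form from the outset, whereas you first record it undifferentiated and then differentiate.
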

\begin{proof}
Subtracting \eqref{FEM1_semi} from \eqref{weakform_FEM1}, and taking the derivative of the second equation with respect to time $t$, we get 
\begin{equation*}
\begin{aligned}
2\mu(\varepsilon(e_h^{\mathbf{u}}), \varepsilon(\mathbf{v}))-(e^q_h,\nabla\cdot\mathbf{v})&=0, ~~~~~\forall \mathbf{v}\in\mathbf{U}_{h},\\
(\lambda^{-1}(e^q_{ht}-e^p_{ht}),w_q)+(\nabla\cdot e^{\mathbf{u}}_{ht},w_q)&=0,~~~~\forall w_q\in Z_{h},\\
-(\lambda^{-1}(e^q_{ht}-e^p_{ht}),w_p)+\kappa(\nabla e^p_h,\nabla w_p)&=0,~~~~\forall w_p\in P_{h}.\\
\end{aligned}
\end{equation*}
With the use of the definitions of projections \eqref{stoke_proj} and \eqref{elliptic_proj}, it follows
\begin{equation}\label{FEM1_semi_erreq1}
\begin{aligned}
2\mu(\varepsilon(\xi_h^{\mathbf{u}}), \varepsilon(\mathbf{v}))-(\xi^q_h,\nabla\cdot\mathbf{v})&=0, \\
(\lambda^{-1}(\xi^q_{ht}-\xi^p_{ht}),w_q)+(\nabla\cdot \xi^{\mathbf{u}}_{ht},w_q)&=-(\lambda^{-1}(\eta^q_{ht}-\eta^p_{ht}),w_q),\\
-(\lambda^{-1}(\xi^q_{ht}-\xi^p_{ht}),w_p)+\kappa(\nabla \xi^p_h,\nabla w_p)&=(\lambda^{-1}(\eta^q_{ht}-\eta^p_{ht}),w_p).\\
\end{aligned}
\end{equation}
Taking $\mathbf{v}=\xi_{ht}^{\mathbf{u}}$, $w_q=\xi_h^q$ and $w_p=\xi_h^p$, then we can deduce that  
\begin{equation*}
\begin{aligned}
\mu\frac{d}{dt}\Vert\varepsilon(\xi_h^{\mathbf{u}})\Vert^2+\frac{\lambda^{-1}}{2}\frac{d}{dt}\Vert\xi_h^q-\xi_h^p\Vert^2+\kappa\Vert\nabla\xi_h^p\Vert^2
&=-\lambda^{-1}(\eta^q_{ht}-\eta^p_{ht}, \xi_h^q-\xi_h^p)\\
&\leq\frac{\lambda^{-1}}{2}\Vert\eta^q_{ht}-\eta^p_{ht}\Vert^2+\frac{\lambda^{-1}}{2}\Vert\xi^q_{h}-\xi^p_{h}\Vert^2.
\end{aligned}
\end{equation*}
Next, integrating over $(0, t)$, since $\xi_h^{\mathbf{u}}(0)=0$ and $\xi_h^q(0)=0, \xi_h^p(0)=0$, we have 
\begin{equation*}
\begin{aligned}
&\mu\Vert\varepsilon(\xi_h^{\mathbf{u}}(t))\Vert^2+\frac{\lambda^{-1}}{2}\Vert\xi_h^q(t)-\xi_h^p(t)\Vert^2 +\kappa\int_0^t\Vert\nabla\xi_h^p\Vert^2ds\\
\leq& \frac{\lambda^{-1}}{2}\int_0^t \Vert \eta^q_{ht}-\eta^p_{ht}\Vert^2 ds+\frac{\lambda^{-1}}{2}\int_0^t\Vert\xi^q_{h}-\xi^p_{h}\Vert^2ds.
\end{aligned}
\end{equation*}
Using the Gronwall Lemma \cite{Quarteroni2008}, one finds 
\begin{equation}\label{FEM1_semi_erreq2}
\begin{aligned}
\mu\Vert\varepsilon(\xi_h^{\mathbf{u}}(t))\Vert^2+\frac{\lambda^{-1}}{2}\Vert\xi_h^q(t)-\xi_h^p(t)\Vert^2+\kappa\int_0^t\Vert\nabla\xi_h^p\Vert^2ds
\leq C \int_0^t \Vert \eta^q_{ht}-\eta^p_{ht}\Vert^2 ds.
\end{aligned}
\end{equation}

Now, to estimate $\Vert \xi_h^q\Vert$, we deduce from the first equation of \eqref{FEM1_semi_erreq1}, for each $\mathbf{v}\in\mathbf{U}_{h}$
\begin{equation*}
\begin{aligned}
b(\mathbf{v}, \xi_h^q)=(\xi_h^q, \nabla\cdot\mathbf{v})=2\mu(\varepsilon(\xi_h^{\mathbf{u}}),\varepsilon({\mathbf{v}} )).
\end{aligned}
\end{equation*}
Thus, using the inf-sup condition \eqref{inf-sup-discrete}, it follows that
\begin{equation}\label{FEM1_semi_eq}
\begin{aligned}
\Vert \xi_h^q\Vert\leq C\sup_{\vert \mathbf{v}\vert_1\neq 0}\frac{(\xi_h^q, \nabla\cdot\mathbf{v})}{\vert \mathbf{v}\vert_1}
=C\sup_{\vert \mathbf{v}\vert_1\neq 0}\frac{2\mu(\varepsilon(\xi_h^{\mathbf{u}}),\varepsilon({\mathbf{v}} ))}{\vert \mathbf{v}\vert_1}\leq C\Vert \varepsilon(\xi_h^{\mathbf{u}})\Vert.
\end{aligned}
\end{equation}

On the other hand, differentiating the first equation of  \eqref{FEM1_semi_erreq1}  with respect to $t$, it follows 
\begin{equation*}
\begin{aligned}
2\mu(\varepsilon(\xi_{ht}^{\mathbf{u}}), \varepsilon(\mathbf{v}))-(\xi^q_{ht},\nabla\cdot\mathbf{v})&=0, ~~~~~\forall \mathbf{v}\in\mathbf{U}_{h}.
\end{aligned}
\end{equation*}
Taking $\mathbf{v}=\xi_{ht}^{\mathbf{u}}$ in the above identity and $w_q=\xi_{ht}^q$, $w_p=\xi_{ht}^p$ in \eqref{FEM1_semi_erreq1}, we have 
\begin{equation*}
\begin{aligned}
2\mu\Vert\varepsilon(\xi_{ht}^{\mathbf{u}})\Vert^2+\lambda^{-1}\Vert\xi_{ht}^q-\xi_{ht}^p\Vert^2+\frac{\kappa}{2}\frac{d}{dt}\Vert\nabla\xi_h^p\Vert^2
&=-\lambda^{-1}(\eta^q_{ht}-\eta^p_{ht}, \xi_{ht}^q-\xi_{ht}^p)\\
&\leq\frac{\lambda^{-1}}{2}\Vert\eta^q_{ht}-\eta^p_{ht}\Vert^2+\frac{\lambda^{-1}}{2}\Vert\xi^q_{ht}-\xi^p_{ht}\Vert^2.
\end{aligned}
\end{equation*}
So we rewrite the above inequality as following
\begin{equation*}
\begin{aligned}
2\mu\Vert\varepsilon(\xi_{ht}^{\mathbf{u}})\Vert^2+\frac{\lambda^{-1}}{2}\Vert\xi_{ht}^q-\xi_{ht}^p\Vert^2+\frac{\kappa}{2}\frac{d}{dt}\Vert\nabla\xi_h^p\Vert^2
\leq \frac{\lambda^{-1}}{2}\Vert\eta^q_{ht}-\eta^p_{ht}\Vert^2.
\end{aligned}
\end{equation*}
Integrating on $(0,t)$, we have
\begin{equation}\label{FEM1_semi_erreq3}
\begin{aligned}
4\mu\int_0^t\Vert\varepsilon(\xi_{ht}^{\mathbf{u}})\Vert^2ds+\lambda^{-1}\int_0^t\Vert\xi_{ht}^q-\xi_{ht}^p\Vert^2ds+\kappa\Vert\nabla\xi_h^p(t)\Vert^2
\leq C\int_0^t\Vert\eta^q_{ht}-\eta^p_{ht}\Vert^2ds.
\end{aligned}
\end{equation}
The proof is completed combining \eqref{FEM1_semi_erreq2}, \eqref{FEM1_semi_eq}, \eqref{FEM1_semi_erreq3} with the definitions of errors.
\end{proof}
Moreover, as an immediate application of \eqref{FEM1_semi_erreq2},  \eqref{FEM1_semi_eq}, and \eqref{FEM1_semi_erreq3}, we have the following corollary.
\begin{corollary} \label{FEM1_co1-semi}
Assume that the inf-sup condition \eqref{inf-sup-discrete} is satisfied for the space pair $(\mathbf{U}_h, Z_h)$. Let $(\mathbf{u},~q,~p)$ and $(\mathbf{u}_h,~q_h,~p_h)$ be the solutions of \eqref{weakform_FEM1} and \eqref{FEM1_semi}, then there exists a constant such that for each $t\in(0,\bar{T}]$
\begin{equation*}
\begin{aligned}
 &\int_0^t (\mu\Vert\varepsilon(e_{ht}^{\mathbf{u}})\Vert^2+\Vert e_{ht}^q\Vert^2)ds\leq C\int_0^t(\mu\Vert\varepsilon(\eta_{ht}^{\mathbf{u}})\Vert^2+\Vert \eta_{ht}^q\Vert^2
 + \Vert \eta^q_{ht}-\eta^p_{ht}\Vert^2 )ds,
\end{aligned}
\end{equation*}
and 
\begin{equation*}
\begin{aligned}
&\kappa\int_0^t \Vert\nabla e_h^p\Vert^2
\leq C\int_0^t (\kappa\Vert\nabla \eta_h^p\Vert^2+\int_0^t \Vert \eta^q_{ht}-\eta^p_{ht}\Vert^2 )ds.
\end{aligned}
\end{equation*}
\end{corollary}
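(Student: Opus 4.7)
The corollary bounds the time integrals of the time-derivatives of the displacement error and total-stress error, and the time integral of the gradient of the pressure error. My plan is to decompose each error via the projections, $e^{\bullet}_h=\eta^{\bullet}_h+\xi^{\bullet}_h$, and then reuse the three estimates \eqref{FEM1_semi_erreq2}, \eqref{FEM1_semi_eq}, \eqref{FEM1_semi_erreq3} already established in the proof of Theorem \ref{FEM1_th1-semi}, handling the $\eta$-parts by the triangle inequality (they land directly in the right-hand side) and the $\xi$-parts by the already-proven bounds.

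First I would write $\mu\Vert\varepsilon(e^{\mathbf{u}}_{ht})\Vert^2 \leq 2\mu\Vert\varepsilon(\eta^{\mathbf{u}}_{ht})\Vert^2 + 2\mu\Vert\varepsilon(\xi^{\mathbf{u}}_{ht})\Vert^2$, integrate in time, and observe that $\int_0^t\mu\Vert\varepsilon(\xi^{\mathbf{u}}_{ht})\Vert^2\,ds$ is controlled by the right-hand side of \eqref{FEM1_semi_erreq3}. For the pressure gradient, the same triangle-inequality splitting and \eqref{FEM1_semi_erreq2} give the second stated bound immediately, since \eqref{FEM1_semi_erreq2} already contains $\kappa\int_0^t\Vert\nabla\xi^p_h\Vert^2\,ds$ on the left.

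The only non-cosmetic step is the bound on $\int_0^t\Vert\xi^q_{ht}\Vert^2\,ds$. I would obtain this by differentiating the first equation of \eqref{FEM1_semi_erreq1} in $t$, which produces
\[
2\mu(\varepsilon(\xi^{\mathbf{u}}_{ht}),\varepsilon(\mathbf{v})) - (\xi^q_{ht},\nabla\cdot\mathbf{v}) = 0, \qquad \forall\mathbf{v}\in\mathbf{U}_h,
\]
and then applying the discrete inf--sup condition \eqref{inf-sup-discrete} exactly as in the derivation of \eqref{FEM1_semi_eq} to get the pointwise-in-time estimate $\Vert\xi^q_{ht}\Vert \leq C\Vert\varepsilon(\xi^{\mathbf{u}}_{ht})\Vert$. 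Squaring and integrating in $t$, this reduces $\int_0^t\Vert\xi^q_{ht}\Vert^2\,ds$ to $\int_0^t\Vert\varepsilon(\xi^{\mathbf{u}}_{ht})\Vert^2\,ds$, which is again controlled by \eqref{FEM1_semi_erreq3}. Adding everything back together, writing $\Vert e^q_{ht}\Vert^2\leq 2\Vert\eta^q_{ht}\Vert^2+2\Vert\xi^q_{ht}\Vert^2$, finishes the proof.

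The \emph{hardest} step is arguably not hard at all: it is just the observation that the inf--sup argument commutes with $\partial_t$ because the first equation is linear and time-independent in its bilinear form. Once that is noted, the whole corollary reduces to triangle-inequality bookkeeping on top of the three inequalities already proved. I would therefore present it in a few lines: state the decomposition, quote \eqref{FEM1_semi_erreq2}--\eqref{FEM1_semi_erreq3}, prove the short time-differentiated inf--sup estimate for $\xi^q_{ht}$, and collect terms.
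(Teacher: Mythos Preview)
Your proposal is correct and follows essentially the same approach as the paper, which states the corollary as ``an immediate application of \eqref{FEM1_semi_erreq2}, \eqref{FEM1_semi_eq}, and \eqref{FEM1_semi_erreq3}.'' The only step beyond literal quotation of those three inequalities is precisely the one you identify---applying the inf--sup argument of \eqref{FEM1_semi_eq} to the time-differentiated first error equation (which already appears in the proof of Theorem~\ref{FEM1_th1-semi}) to control $\Vert\xi^q_{ht}\Vert$ by $\Vert\varepsilon(\xi^{\mathbf{u}}_{ht})\Vert$---and you handle it correctly.
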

\subsection{Error estimate for the fully-discrete scheme}\label{sec:3.2}
 Denote the error in displacement at time $t^n$ by $e_{\mathbf{u}}^n=\mathbf{u}(t^n)-\mathbf{u}^n=\mathbf{u}(t^n)-Q_h^{\mathbf{u}}\mathbf{u}(t^n)+Q_h^{\mathbf{u}}\mathbf{u}(t^n)-\mathbf{u}^n:=\eta_{\mathbf{u}}^n+\xi_{\mathbf{u}}^n$.
We define $e_q^n$ and $e_p^n$ in a similar fashion as $e_{\mathbf{u}}^n$.
\begin{theorem} \label{FEM1_th2-fully}
Assume that the inf-sup condition \eqref{inf-sup-discrete} is satisfied for the space pair $(\mathbf{U}_h, Z_h)$. Let $(\mathbf{u},~q,~p)$ and $(\mathbf{u}^n,~q^n,~p^n)$ be the solutions of \eqref{weakform_FEM1} and \eqref{FEM1_fully}, then there exists a constant such that for $t^N\in(0,\bar{T})$
\begin{equation}\label{FEM1_fully-err-eq1}
\begin{aligned}
\mu\Vert\varepsilon(e_{\mathbf{u}}^N)\Vert^2+\Vert e_q^N\Vert^2
\leq &C\Big(\mu\Vert\varepsilon(\eta_{\mathbf{u}}^N)\Vert^2+\Vert \eta_q^N\Vert^2+\tau \int_{0}^{t^N}(\Vert \eta_{ht}^q\Vert^2+\Vert  \eta_{ht}^p\Vert^2)ds\\
&+\tau^3\int_{0}^{t^N}(\Vert q_{tt}\Vert^2+\Vert p_{tt}\Vert^2)ds+\tau^2\int_{0}^{t^N}\Vert \mathbf{u}_{tt}\Vert_1^2ds\Big),\\
\end{aligned}
\end{equation}
and
\begin{equation}\label{FEM1_fully-err-eq2}
\begin{aligned}
\kappa\Vert\nabla e_p^N\Vert^2&\leq \kappa\Vert\nabla \eta_p^N\Vert^2+C\Big(\tau^2\int_{0}^{t^N}(\Vert q_{tt}\Vert^2+\Vert p_{tt}
\Vert^2+\Vert\mathbf{u}_{tt}\Vert_1^2)ds\\
&+\int_{0}^{t^N}(\Vert \eta_{ht}^q\Vert^2+\Vert  \eta_{ht}^p\Vert^2)ds\Big).
\end{aligned}
\end{equation}
\end{theorem}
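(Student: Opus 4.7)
The plan is to follow the template of Theorem \ref{FEM1_th1-semi} with two modifications needed for the backward Euler setting. First, subtracting \eqref{FEM1_fully} from \eqref{weakform_FEM1} evaluated at $t=t^n$ introduces time-truncation residuals $R^n_q:=q_t(t^n)-\bar{\partial}_t q(t^n)$, $R^n_p:=p_t(t^n)-\bar{\partial}_t p(t^n)$, and $R^n_{\mathbf{u}}:=\mathbf{u}_t(t^n)-\bar{\partial}_t\mathbf{u}(t^n)$, which by Taylor's theorem with integral remainder satisfy $\|R^n_q\|^2\leq C\tau\int_{t^{n-1}}^{t^n}\|q_{tt}\|^2 ds$ and analogous bounds for $R^n_p$ and $R^n_{\mathbf{u}}$ (the latter in $H^1$-norm). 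Second, continuous $\partial_t$ on the $\xi$-side is replaced by $\bar{\partial}_t$. Using the projection properties \eqref{stoke_proj}--\eqref{elliptic_proj} to annihilate the Stokes coupling with $\eta^n_q$, the $(\nabla\cdot\bar{\partial}_t\eta^n_{\mathbf{u}},w_q)$ term, and the elliptic $\eta^n_p$ term, the three $\xi$-identities take the same form as in \eqref{FEM1_semi_erreq1} with $\partial_t\to\bar{\partial}_t$, except that the second and third equations carry additional forcing $\pm\lambda^{-1}(R^n_q-R^n_p,\cdot)$ and (in the second) $-(\nabla\cdot R^n_{\mathbf{u}},\cdot)$.

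For \eqref{FEM1_fully-err-eq1}, I would test the first $\xi$-identity with $\mathbf{v}=\xi^n_{\mathbf{u}}-\xi^{n-1}_{\mathbf{u}}=\tau\bar{\partial}_t\xi^n_{\mathbf{u}}$, the second with $w_q=\tau\xi^n_q$, and the third with $w_p=\tau\xi^n_p$, then add the three. The discrete convexity inequality $(a^n,a^n-a^{n-1})\geq\tfrac12(\|a^n\|^2-\|a^{n-1}\|^2)$ telescopes the $\mu\|\varepsilon(\xi^n_{\mathbf{u}})\|^2$ and $\tfrac{\lambda^{-1}}{2}\|\xi^n_q-\xi^n_p\|^2$ contributions, leaving $\kappa\tau\|\nabla\xi^n_p\|^2$ as a positive damping. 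Summing $n=1,\ldots,N$ with $\xi^0=0$ and bounding the right-hand side by Young's inequality gives $\tau\int_0^{t^N}(\|\eta^q_{ht}\|^2+\|\eta^p_{ht}\|^2)ds$ from the projection time-derivative pairings (via $\|\eta^n_q-\eta^{n-1}_q\|^2\leq\tau\int_{t^{n-1}}^{t^n}\|\eta^q_{ht}\|^2 ds$); $\tau^3\int_0^{t^N}(\|q_{tt}\|^2+\|p_{tt}\|^2)ds$ from $\tau^2\|R^n_q\|^2\leq C\tau^3\int_{t^{n-1}}^{t^n}\|q_{tt}\|^2 ds$ after pairing $\tau(R^n_q-R^n_p)$ with $\xi^n_q-\xi^n_p$; and $\tau^2\int_0^{t^N}\|\mathbf{u}_{tt}\|_1^2 ds$ from the $(\nabla\cdot R^n_{\mathbf{u}},\tau\xi^n_q)$ pairing after invoking the inf-sup bound $\|\xi^n_q\|\leq C\|\varepsilon(\xi^n_{\mathbf{u}})\|$ from the first identity together with a $\sqrt\tau$-$\sqrt\tau$ Young splitting. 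Discrete Gronwall absorbs the $\epsilon\tau\sum\|\xi^n_q-\xi^n_p\|^2$ and $\epsilon\tau\sum\|\varepsilon(\xi^n_{\mathbf{u}})\|^2$ cross terms (requiring $\tau$ small enough), and the inf-sup condition applied at $n=N$ converts the $\mu\|\varepsilon(\xi^N_{\mathbf{u}})\|^2$ bound into $\|\xi^N_q\|^2$; the triangle inequality $e^N=\eta^N+\xi^N$ completes \eqref{FEM1_fully-err-eq1}.

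For \eqref{FEM1_fully-err-eq2}, I would mimic the semi-discrete derivation of \eqref{FEM1_semi_erreq3}: apply $\bar{\partial}_t$ to the first $\xi$-identity to obtain $2\mu(\varepsilon(\bar{\partial}_t\xi^n_{\mathbf{u}}),\varepsilon(\mathbf{v}))-(\bar{\partial}_t\xi^n_q,\nabla\cdot\mathbf{v})=0$, then test the three $\bar{\partial}_t$-differentiated equations with $\mathbf{v}=\bar{\partial}_t\xi^n_{\mathbf{u}}$, $w_q=\bar{\partial}_t\xi^n_q$, and $w_p=\bar{\partial}_t\xi^n_p$. Adding produces $2\mu\|\varepsilon(\bar{\partial}_t\xi^n_{\mathbf{u}})\|^2+\lambda^{-1}\|\bar{\partial}_t\xi^n_q-\bar{\partial}_t\xi^n_p\|^2$ together with the telescoping combination $\tfrac{\kappa}{2\tau}(\|\nabla\xi^n_p\|^2-\|\nabla\xi^{n-1}_p\|^2)$ on the left. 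Multiplying by $\tau$, summing from $1$ to $N$, and bounding each right-hand side term by Young's inequality (with $\|\bar{\partial}_t\xi^n_q\|\leq C\|\varepsilon(\bar{\partial}_t\xi^n_{\mathbf{u}})\|$ inherited from the $\bar{\partial}_t$-differentiated first identity and absorbed into $2\mu\|\varepsilon(\bar{\partial}_t\xi^n_{\mathbf{u}})\|^2$) produces $\int_0^{t^N}(\|\eta^q_{ht}\|^2+\|\eta^p_{ht}\|^2)ds$ (without the extra $\tau$ factor here, since one $\tau$ is consumed in $\tau\|\bar{\partial}_t\eta^n_q\|^2\leq\int_{t^{n-1}}^{t^n}\|\eta^q_{ht}\|^2 ds$) and $\tau^2\int_0^{t^N}(\|q_{tt}\|^2+\|p_{tt}\|^2+\|\mathbf{u}_{tt}\|_1^2)ds$ for the three time-truncation residuals; a triangle inequality with $\nabla\eta^N_p$ then delivers \eqref{FEM1_fully-err-eq2}.

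The main obstacle is the $R^n_{\mathbf{u}}$ residual, which pairs naturally with $\xi^n_q$ rather than with the energy variable $\xi^n_q-\xi^n_p$, so it cannot be closed by the $\tfrac{\lambda^{-1}}{2}\|\xi^n_q-\xi^n_p\|^2$ energy alone. Rerouting it through the static inf-sup bound $\|\xi^n_q\|\leq C\|\varepsilon(\xi^n_{\mathbf{u}})\|$ and absorbing the resulting $\|\varepsilon(\xi^n_{\mathbf{u}})\|^2$ via a $\sqrt\tau$-weighted Young step together with discrete Gronwall (under the standard smallness hypothesis on $\tau$) is what produces the $\tau^2\int\|\mathbf{u}_{tt}\|_1^2 ds$ contribution and accounts for the asymmetry between the $\tau^2$ and $\tau^3$ exponents appearing in \eqref{FEM1_fully-err-eq1}.
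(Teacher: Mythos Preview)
Your proposal is correct and follows the paper's proof almost line for line: the same error splitting via the Stokes and elliptic projections, the same test functions $\mathbf{v}=\tau\bar{\partial}_t\xi^n_{\mathbf{u}}$, $w_q=\tau\xi^n_q$, $w_p=\tau\xi^n_p$ for \eqref{FEM1_fully-err-eq1}, the same $\bar{\partial}_t$-differentiated first identity tested with $\bar{\partial}_t$-increments for \eqref{FEM1_fully-err-eq2}, and the same use of the discrete inf-sup bound $\|\bar{\partial}_t\xi^n_q\|\leq C\|\varepsilon(\bar{\partial}_t\xi^n_{\mathbf{u}})\|$ to close the second estimate.

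The only substantive difference is your treatment of the residual $-(\nabla\cdot R^n_{\mathbf{u}},\tau\xi^n_q)$ in the first estimate. You invoke the static inf-sup bound $\|\xi^n_q\|\leq C\|\varepsilon(\xi^n_{\mathbf{u}})\|$, use a $\sqrt\tau$-weighted Young split, and feed the resulting $\epsilon\tau\sum\|\varepsilon(\xi^n_{\mathbf{u}})\|^2$ into discrete Gronwall alongside the $\|\xi^n_q-\xi^n_p\|^2$ terms. The paper instead writes $\|\xi^n_q\|^2\leq 2\|\xi^n_q-\xi^n_p\|^2+2C_p\|\nabla\xi^n_p\|^2$ via Poincar\'e and absorbs the $\|\nabla\xi^n_p\|^2$ piece directly into the dissipative term $\kappa\tau\|\nabla\xi^n_p\|^2$ by choosing $\epsilon_5=\min\{2\kappa/C_p,\lambda^{-1}/(2\tau)\}$. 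Both routes are valid and yield the same final bound; the paper's version avoids putting $\|\varepsilon(\xi^n_{\mathbf{u}})\|^2$ into the Gronwall loop at the price of a constant depending on $\kappa^{-1}$, while yours keeps the absorption purely algebraic but runs Gronwall on two quantities.
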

\begin{proof}
For each $\mathbf{v}\in\mathbf{U}_h$, using the definition of Stokes projection \eqref{stoke_proj},  we have 
\begin{equation*}
\begin{aligned}
2\mu(\varepsilon(Q_h^{\mathbf{u}}\mathbf{u}(t^n)), \varepsilon(\mathbf{v}))-(Q_h^q q(t^n),\nabla\cdot\mathbf{v})=&2\mu(\varepsilon(\mathbf{u}(t^n)), \varepsilon(\mathbf{v}))-(q(t^n),\nabla\cdot\mathbf{v})\\
=&(\mathbf{f}^n,\mathbf{v})+\langle \beta^n, \mathbf{v}\rangle_{\Gamma_N}.
\end{aligned}
\end{equation*}
Combining the above identity with the first equation of \eqref{FEM1_fully} yields
\begin{equation}\label{FEM1_fully-discrete-eq1}
\begin{aligned}
2\mu(\varepsilon(\xi_{\mathbf{u}}^n), \varepsilon(\mathbf{v}))-(\xi_q^n,\nabla\cdot\mathbf{v})=0.
\end{aligned}
\end{equation}
For each $w_q\in Z_h$,  taking into account the derivative of the second equation in \eqref{weakform_FEM1} with respect to time $t$ and \eqref{stoke_proj},  one obtains
\begin{equation*}
\begin{aligned}
&(\lambda^{-1}(\bar{\partial}_t Q_h^q q(t^n)-\bar{\partial}_t Q_h^p p(t^n)),w_q)+(\nabla\cdot\bar{\partial}_t{Q_h^{\mathbf{u}}\mathbf{u}(t^n)},w_q)\\
=&-\lambda^{-1}(q_t(t^n)-\bar{\partial}_t q(t^n),w_q)+\lambda^{-1}(p_t(t^n)-\bar{\partial}_t p(t^n),w_q)
-\lambda^{-1}(\bar{\partial}_t (I-Q_h^q) q(t^n),w_q)\\
&+\lambda^{-1}(\bar{\partial}_t (I-Q_h^p) p(t^n),w_q)-(\nabla\cdot(\mathbf{u}_t-\bar{\partial}_t \mathbf{u})(t^n), w_q).
\end{aligned}
\end{equation*}
Rewriting the second term of \eqref{FEM1_fully} as $(\lambda^{-1}(\bar{\partial}_t q^n-\bar{\partial}_t p^n),w_q)+(\nabla\cdot\bar{\partial}_t{\mathbf{u}^n},w_q)=0$ and substituting it into the above identity lead to 
 \begin{equation}\label{FEM1_fully-discrete-eq2}
\begin{aligned}
&(\lambda^{-1}(\bar{\partial}_t \xi_q^n-\bar{\partial}_t  \xi_p^n),w_q)+(\nabla\cdot\bar{\partial}_t \xi_{\mathbf{u}}^n,w_q)\\
=&-\lambda^{-1}(q_t(t^n)-\bar{\partial}_t q(t^n),w_q)+\lambda^{-1}(p_t(t^n)-\bar{\partial}_t p(t^n),w_q)
-\lambda^{-1}(\bar{\partial}_t (I-Q_h^q) q(t^n),w_q)\\
&+\lambda^{-1}(\bar{\partial}_t (I-Q_h^p) p(t^n),w_q)-(\nabla\cdot(\mathbf{u}_t-\bar{\partial}_t \mathbf{u})(t^n), w_q).
\end{aligned}
\end{equation}
For each $w_p\in P_h$,  the definition of elliptic projection \eqref{elliptic_proj} and third equation of \eqref{weakform_FEM1} imply that
\begin{equation*}
\begin{aligned}
&-(\lambda^{-1}(\bar{\partial}_t Q_h^q q(t^n)-\bar{\partial}_t Q_h^p p(t^n)),w_p)+\kappa(\nabla Q_h^p  p(t^n),\nabla w_p)\\
=&-\lambda^{-1}(q_t(t^n)-p_t(t^n), w_p)+\kappa(\nabla p(t^n),\nabla w_p)+\lambda^{-1}(q_t(t^n)-\bar{\partial}_t q(t^n),w_p)\\
&-\lambda^{-1}(p_t(t^n)-\bar{\partial}_t p(t^n),w_p)+\lambda^{-1}(\bar{\partial}_t (I-Q_h^q) q(t^n),w_p)\\
&-\lambda^{-1}(\bar{\partial}_t (I-Q_h^p) p(t^n),w_p)\\
=&(g^n, w_p)+\langle \gamma^n, w_p\rangle_{\Gamma_N}+\lambda^{-1}(q_t(t^n)-\bar{\partial}_t q(t^n),w_p)-\lambda^{-1}(p_t(t^n)-\bar{\partial}_t p(t^n),w_p)\\
&+\lambda^{-1}(\bar{\partial}_t (I-Q_h^q) q(t^n),w_p)-\lambda^{-1}(\bar{\partial}_t (I-Q_h^p) p(t^n),w_p).
\end{aligned}
\end{equation*}
Therefore, by employing the third equation of \eqref{FEM1_fully}, one finds
 \begin{equation}\label{FEM1_fully-discrete-eq3}
\begin{aligned}
&-(\lambda^{-1}(\bar{\partial}_t \xi_q^n-\bar{\partial}_t \xi_p^n),w_p)+\kappa(\nabla  \xi_p^n,\nabla w_p)\\
=&\lambda^{-1}(q_t(t^n)-\bar{\partial}_t q(t^n),w_p)-\lambda^{-1}(p_t(t^n)-\bar{\partial}_t p(t^n),w_p)\\
&+\lambda^{-1}(\bar{\partial}_t (I-Q_h^q) q(t^n),w_p)-\lambda^{-1}(\bar{\partial}_t (I-Q_h^p) p(t^n),w_p).
\end{aligned}
\end{equation}
To proceed  our analysis, taking $\mathbf{v}=\tau\bar{\partial}_t \xi_{\mathbf{u}}^n$, $w_q=\tau\xi_q^n$ and $w_p=\tau\xi_p^n$ in \eqref{FEM1_fully-discrete-eq1}, \eqref{FEM1_fully-discrete-eq2} and \eqref{FEM1_fully-discrete-eq3} respectively, we have 
 \begin{equation*}
\begin{aligned}
LEH:=&2\mu(\varepsilon(\xi_{\mathbf{u}}^n), \varepsilon(\tau\bar{\partial}_t \xi_{\mathbf{u}}^n))+(\lambda^{-1}(\bar{\partial}_t \xi_q^n-\bar{\partial}_t  \xi_p^n),\tau\xi_q^n-\tau\xi_p^n)+\tau\kappa\Vert\nabla  \xi_p^n\Vert^2\\
=&-\lambda^{-1}(q_t(t^n)-\bar{\partial}_t q(t^n),\tau\xi_q^n-\tau\xi_p^n)
+\lambda^{-1}(p_t(t^n)-\bar{\partial}_t p(t^n),\tau\xi_q^n-\tau\xi_p^n)\\
&-\lambda^{-1}(\bar{\partial}_t (I-Q_h^q) q(t^n),\tau\xi_q^n-\tau\xi_p^n)
+\lambda^{-1}(\bar{\partial}_t (I-Q_h^p) p(t^n),\tau\xi_q^n-\tau\xi_p^n)\\
&-(\nabla\cdot(\mathbf{u}_t-\bar{\partial}_t )\mathbf{u}(t^n), \tau\xi_q^n)\\
=&:REH.
\end{aligned}
\end{equation*}
Note that, using Cauchy–Schwarz inequality and Poincar$\acute{e}$ inequality, it follows
 \begin{equation*}
\begin{aligned}
REH\leq& \tau^2\frac{\lambda^{-1}}{\epsilon_1}\Vert q_t(t^n)-\bar{\partial}_t q(t^n) \Vert^2+\tau^2\frac{\lambda^{-1}}{\epsilon_2}\Vert (p_t(t^n)-\bar{\partial}_t p(t^n)\Vert^2\\
&+\tau^2\frac{\lambda^{-1}}{\epsilon_3}\Vert  \bar{\partial}_t (I-Q_h^q) q(t^n)\Vert^2
+\tau^2\frac{\lambda^{-1}}{\epsilon_4}\Vert  \bar{\partial}_t (I-Q_h^p) p(t^n)\Vert^2\\
&+\lambda^{-1}\sum_{1\leq i\leq4}\frac{\epsilon_i}{4}\Vert  \xi_q^n-\xi_p^n \Vert^2
+\frac{\tau}{\epsilon_5}\Vert\nabla\cdot(\mathbf{u}_t- \bar{\partial}_t)\mathbf{u} (t^n)\Vert ^2+\frac{\epsilon_5}{4}\tau\Vert \xi_q^n\Vert ^2\\
\leq& C\tau^3\lambda^{-1}\int_{t^{n-1}}^{t^n}(\Vert q_{tt}\Vert^2+\Vert p_{tt}\Vert^2)ds+C\tau\lambda^{-1}\int_{t^{n-1}}^{t^n}(\Vert \eta_{ht}^q\Vert^2+\Vert  \eta_{ht}^p\Vert^2)ds\\
&+C\tau^2\int_{t^{n-1}}^{t^n}\Vert \mathbf{u}_{tt}\Vert_1^2ds
+(\lambda^{-1}\sum_{1\leq i\leq4}\frac{\epsilon_i}{4}+\frac{\epsilon_5\tau}{2})\Vert  \xi_q^n-\xi_p^n \Vert^2+C_p\frac{\epsilon_5}{2}\tau\Vert \nabla\xi_p^n\Vert ^2,
\end{aligned}
\end{equation*}
where $\epsilon_i,~i=1:5$ are positive constants. Here, we take into account the following inequalities,
 \begin{equation*}
\begin{aligned}
&\Vert q_t(t^n)-\bar{\partial}_t q(t^n) \Vert^2= \Vert\frac{1}{\tau}\int_{t^{n-1}}^{t^n} (s-t^{n-1})q_{tt} ds\Vert^2\leq \tau\int_{t^{n-1}}^{t^n} \Vert q_{tt}\Vert^2ds,\\
&\Vert  \bar{\partial}_t (I-Q_h^q) q(t^n)\Vert^2=\Vert\frac{1}{\tau}\int_{t^{n-1}}^{t^n} (I-Q_h^q)q_{t} ds\Vert^2\leq C\frac{1}{\tau}\int_{t^{n-1}}^{t^n} \Vert \eta_{ht}^q\Vert^2ds,\\
&\Vert \xi_q^n\Vert^2 \leq 2\Vert \xi_q^n- \xi_p^n\Vert^2+2\Vert \xi_p^n\Vert^2\leq 2\Vert \xi_q^n- \xi_p^n\Vert^2+2C_p\Vert \nabla\xi_p^n\Vert^2.
\end{aligned}
\end{equation*}
Choosing $\epsilon_5=\displaystyle\min\left\{\frac{2\kappa}{C_p}, \frac{\lambda^{-1}}{2\tau}\right\}$, $\epsilon_i=\frac{1}{4},~i=1:4$, then we can reformulate the above inequality as
 \begin{equation*}
\begin{aligned}
&\mu\Vert\varepsilon(\xi_{\mathbf{u}}^n)\Vert^2-\mu\Vert\varepsilon(\xi_{\mathbf{u}}^{n-1})\Vert^2+\frac{\lambda^{-1}}{2}\Vert\xi_q^n-\xi_p^n\Vert^2-\frac{\lambda^{-1}}{2}\Vert\xi_q^{n-1}-\xi_p^{n-1}\Vert^2+\tau\frac{\kappa}{2}\Vert\nabla  \xi_p^n\Vert^2\\
\leq& C\tau^3\lambda^{-1}\int_{t^{n-1}}^{t^n}(\Vert q_{tt}\Vert^2
+\Vert p_{tt}\Vert^2)ds+C\tau\lambda^{-1}\int_{t^{n-1}}^{t^n}(\Vert \eta_{ht}^q\Vert^2+\Vert \eta_{ht}^p\Vert^2)ds\\
&+C\tau^2\int_{t^{n-1}}^{t^n}\Vert \mathbf{u}_{tt}\Vert_1^2ds+\frac{\lambda^{-1}}{2}\Vert\xi_q^n-\xi_p^n\Vert^2,
\end{aligned}
\end{equation*}
where we use the estimate
 \begin{equation*}
\begin{aligned}
&\mu\Vert\varepsilon(\xi_{\mathbf{u}}^n)\Vert^2-\mu\Vert\varepsilon(\xi_{\mathbf{u}}^{n-1})\Vert^2+\frac{\lambda^{-1}}{2}\Vert\xi_q^n-\xi_p^n\Vert^2-\frac{\lambda^{-1}}{2}\Vert\xi_q^{n-1}-\xi_p^{n-1}\Vert^2+\tau\kappa\Vert\nabla  \xi_p^n\Vert^2\\
\leq& LEH.
\end{aligned}
\end{equation*}
Then, adding $n$ from $1$ to $N$, we get 
 \begin{equation*}
\begin{aligned}
&\mu\Vert\varepsilon(\xi_{\mathbf{u}}^N)\Vert^2+\frac{\lambda^{-1}}{2}\Vert\xi_q^N-\xi_p^N\Vert^2+\tau\frac{\kappa}{2}\sum_{n=1}^N\Vert\nabla  \xi_p^n\Vert^2\\
\leq& C\tau^3\int_{0}^{t^N}(\Vert q_{tt}\Vert^2
+\Vert p_{tt}\Vert^2)ds+C\tau \int_{0}^{t^N}(\Vert \eta_{ht}^q\Vert^2+\Vert  \eta_{ht}^p\Vert^2)ds\\
&+C\tau^2\int_{0}^{t^N}\Vert \mathbf{u}_{tt}\Vert_1^2ds+\frac{\lambda^{-1}}{2}\sum_{n=1}^N\Vert\xi_q^n-\xi_p^n\Vert^2.
\end{aligned}
\end{equation*}
Using discrete Gronwall Lemma \cite{Quarteroni2008},  we deduce 
\begin{equation}\label{FEM1_fully-discrete-eq4}
\begin{aligned}
&\mu\Vert\varepsilon(\xi_{\mathbf{u}}^N)\Vert^2+\frac{\lambda^{-1}}{2}\Vert\xi_q^N-\xi_p^N\Vert^2+\tau\frac{\kappa}{2}\sum_{n=1}^N\Vert\nabla  \xi_p^n\Vert^2\\
\leq& C\Big(\tau^3\int_{0}^{t^N}(\Vert q_{tt}\Vert^2
+\Vert p_{tt}\Vert^2)ds
+\tau \int_{0}^{t^N}(\Vert \eta_{ht}^q\Vert^2+\Vert  \eta_{ht}^p\Vert^2)ds
+\tau^2\int_{0}^{t^N}\Vert \mathbf{u}_{tt}\Vert_1^2ds\Big).
\end{aligned}
\end{equation}
Note that  $\Vert \xi_q^n\Vert\leq C2\mu\Vert\varepsilon(\xi_{\mathbf{u}}^n)\Vert$ follows from \eqref{FEM1_fully-discrete-eq2} and \eqref{inf-sup-discrete}, therefore,  it leads to  the error estimates \eqref{FEM1_fully-err-eq1}.

On the other side, rewriting  \eqref{FEM1_fully-discrete-eq1}  leads to for each $\mathbf{v}\in\mathbf{U}_h$
 \begin{equation}\label{FEM1_fully-discrete-eq1.1}
\begin{aligned}
2\mu(\varepsilon(\bar{\partial}_t\xi_{\mathbf{u}}^n), \varepsilon(\mathbf{v}))-(\bar{\partial}_t\xi_q^n,\nabla\cdot\mathbf{v})=0.
\end{aligned}
\end{equation}
Choosing $\mathbf{v}=\tau\bar{\partial}_t\xi_{\mathbf{u}}^n$, $w_q=\tau\bar{\partial}_t\xi_q^n$ and $w_q=\tau\bar{\partial}_t\xi_q^n$ 
in \eqref{FEM1_fully-discrete-eq1.1}, \eqref{FEM1_fully-discrete-eq2} and  \eqref{FEM1_fully-discrete-eq3}, thus we can get
 \begin{equation*}
\begin{aligned}
&2\mu\tau\Vert\varepsilon(\bar{\partial}_t\xi_{\mathbf{u}}^n) \Vert^2+\lambda^{-1}\tau\Vert\bar{\partial}_t \xi_q^n-\bar{\partial}_t  \xi_p^n\Vert^2
+\kappa\tau(\nabla  \xi_p^n, \nabla\bar{\partial}_t\xi_p^n)\\
=&-\Big(\nabla\cdot(\int_{t^{n-1}}^{t^n}\mathbf{u}_{tt}(t-t^{n-1})ds), \bar{\partial}_t\xi_q^n\Big)
-\lambda^{-1}\Big(\int_{t^{n-1}}^{t^n} (\eta^q_{ht}-\eta^p_{ht})ds,\bar{\partial}_t\xi_q^n-\bar{\partial}_t\xi_p^n\Big)\\
&-\lambda^{-1}\Big(\int_{t^{n-1}}^{t^n}(q_{tt}- p_{tt})(t-t^{n-1})ds,\bar{\partial}_t\xi_q^n-\bar{\partial}_t\xi_p^n\Big)=: REH_F.
\end{aligned}
\end{equation*}
with the use of Cauchy-Schwarz inequality,  we obtain
 \begin{equation*}
\begin{aligned}
& REH_F\leq C\tau^2\lambda^{-1}\int_{t^{n-1}}^{t^n}(\Vert q_{tt}\Vert^2+\Vert p_{tt}\Vert^2)ds+C\lambda^{-1}\int_{t^{n-1}}^{t^n}\Vert\eta^q_{ht}-\eta^p_{ht}\Vert^2ds\\
&+\frac{\epsilon_1+\epsilon_2}{4}\lambda^{-1}\tau\Vert\bar{\partial}_t\xi_q^n-\bar{\partial}_t\xi_p^n\Vert^2+C\tau^2\int_{t^{n-1}}^{t^n}\Vert\mathbf{u}_{tt}\Vert_1^2ds
+\epsilon_3\tau\Vert\bar{\partial}_t\xi_q^n\Vert^2.
\end{aligned}
\end{equation*}
 Using \eqref{FEM1_fully-discrete-eq1.1} and inf-sup condition, it leads to $\Vert\bar{\partial}_t\xi_q^n\Vert\leq C\Vert\varepsilon(\bar{\partial}_t\xi_{\mathbf{u}}^n)\Vert$. 
Then, we take $\epsilon_1=\epsilon_2=\frac{1}{2}$, and $\epsilon_3=\frac{\mu }{C}$ and find
\begin{equation*}
\begin{aligned}
&\mu\tau\Vert\varepsilon(\bar{\partial}_t\xi_{\mathbf{u}}^n) \Vert^2+\frac{\lambda^{-1}}{2}\tau\Vert\bar{\partial}_t \xi_q^n-\bar{\partial}_t  \xi_p^n\Vert^2
+\frac{\kappa}{2}\Vert\nabla  \xi_p^n\Vert^2 -\frac{\kappa}{2}\Vert\nabla  \xi_p^{n-1}\Vert^2\\
&\leq C\tau^2\int_{t^{n-1}}^{t^n}(\Vert q_{tt}\Vert^2+\Vert p_{tt}
\Vert^2)ds+C\int_{t^{n-1}}^{t^n}\Vert\eta^q_{ht}-\eta^p_{ht}\Vert^2ds
+C\tau^2\int_{t^{n-1}}^{t^n}\Vert\mathbf{u}_{tt}\Vert_1^2ds.
\end{aligned}
\end{equation*}
Adding $n$ from $1$ to $N$, we obtain
\begin{equation}\label{FEM1_fully-discrete-eq5}
\begin{aligned}
& \mu\tau\sum_{n=1}^N\Vert\varepsilon(\bar{\partial}_t\xi_{\mathbf{u}}^n) \Vert^2+\frac{\lambda^{-1}}{2}\tau\sum_{n=1}^N\Vert\bar{\partial}_t \xi_q^n-\bar{\partial}_t  \xi_p^n\Vert^2
+\frac{\kappa}{2}\Vert\nabla  \xi_p^N\Vert^2 \\
&\leq C\tau^2\int_{0}^{t^N}(\Vert q_{tt}\Vert^2+\Vert p_{tt}
\Vert^2+\Vert\mathbf{u}_{tt}\Vert_1^2)ds+C\int_{0}^{t^N}\Vert\eta^q_{ht}-\eta^p_{ht}\Vert^2ds.
\end{aligned}
\end{equation}
Thus, we complete the proof  \eqref{FEM1_fully-err-eq2} from \eqref{FEM1_fully-discrete-eq5}.
\end{proof}

\subsection{Applying to lower-order finite elements}\label{sec:3.3}
In this subsection, we discuss two lowest order finite element approximation by considering the finite element space for the total stress to be continuous and discontinuous, then we present the error estimates of them.
\subsubsection{Finite elements with discontinuous total stress for $k=2$, $l=0$}
Consider the spaces,
\begin{equation*}
\begin{aligned}
 \mathbf{U}_{h}:=&\{\mathbf{v}\in[H^1_{0,D}(\Omega)]^d\cap[C^0(\Omega)]^d:\mathbf{v}\mid_{T}\in[{P}_{2}(T)]^d, \forall T \in \mathcal{T}_{h}\},\\
 Z_{h}:=&\{q\in L^2(\Omega):q\mid_{T}\in{P}_{0}(T), \forall T \in \mathcal{T}_{h}\},\\
P_{h}:=&\{p\in H^1_{0,D}(\Omega)\cap C^0(\Omega):p\mid_{T}\in{P}_{1}(T), \forall T \in \mathcal{T}_{h}\}.\\
 \end{aligned}
\end{equation*}
The inf-sup condtition \eqref{inf-sup-discrete} for the space pair $(\mathbf{U}_h, Z_h)$ is known \cite{Brezzi1991}. With the use of the properties of Stokes projection (see Lemma 3.1 in \cite{murad1996}), it follows
\begin{equation*}
\begin{aligned}
\Vert(I-Q_h^{\mathbf{u}})\mathbf{u} \Vert_1^2+\Vert(I-Q_h^{q})q \Vert^2 \leq Ch^2(\Vert \mathbf{u} \Vert_2^2+\Vert q \Vert_1^2 ).
 \end{aligned}
\end{equation*}
Hence, combining the above inequality with properties of elliptic projection \cite{Quarteroni2008}, the error estimates of semi-discrete scheme in Theorem \ref{FEM1_th1-semi} with $([P_2]^d,P_0,P_1)$ elements can be rewritten as follows.
\begin{theorem} \label{FEM1_th1-semi_p1p0p1}
Let $(\mathbf{u},~q,~p)$ and $(\mathbf{u}_h,~q_h,~p_h)$ be the solutions of \eqref{weakform_FEM1} and \eqref{FEM1_semi}. Assume $\mathbf{u}\in L^{\infty}(0,\bar{T}; [H^2(\Omega)]^d)$,
 $q\in L^{\infty}(0,\bar{T}; H^1(\Omega))$,  $q_t\in L^2(0,\bar{T}; H^1(\Omega))$,  $p\in L^{\infty}(0,\bar{T}; H^2(\Omega))$,  $p_t\in L^2(0,\bar{T}; H^1(\Omega))$, then there exists a constant such that for each $t\in(0,\bar{T}]$
\begin{equation*}
\begin{aligned}
 &\mu\Vert\varepsilon(e_h^{\mathbf{u}}(t))\Vert^2+\Vert e_h^q(t)\Vert^2\leq Ch^2\Big(\Vert \mathbf{u}(t)\Vert_2^2+\Vert q(t)\Vert_1^2
 +\int_0^t (\Vert q_{t}\Vert_1^2+\Vert p_{t}\Vert_1^2 )ds\Big),
\end{aligned}
\end{equation*}
and 
\begin{equation*}
\begin{aligned}
&\kappa\Vert\nabla e_h^p(t)\Vert^2
\leq Ch^2\Big(\Vert p(t)\Vert_2^2+\int_0^t (\Vert q_{t}\Vert_1^2+\Vert p_{t}\Vert_1^2 ) ds\Big).
\end{aligned}
\end{equation*}
\end{theorem}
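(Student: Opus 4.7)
The plan is to apply Theorem \ref{FEM1_th1-semi} as a black box and reduce the statement to bounding the three projection-error norms $\|\varepsilon(\eta_h^{\mathbf{u}}(t))\|$, $\|\eta_h^q(t)\|$, $\|\nabla\eta_h^p(t)\|$, and the time-differentiated quantity $\int_0^t\|\eta^q_{ht}-\eta^p_{ht}\|^2\,ds$, in terms of $h$ and suitable Sobolev norms of $(\mathbf{u},q,p,q_t,p_t)$.

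First I would quote the approximation property of the Stokes projection for the pair $([P_2]^d,P_0)$ cited from Lemma 3.1 of \cite{murad1996}, namely
\[
\|(I-Q_h^{\mathbf{u}})\mathbf{u}\|_1^2+\|(I-Q_h^q)q\|^2\le Ch^2\bigl(\|\mathbf{u}\|_2^2+\|q\|_1^2\bigr),
\]
so that $\mu\|\varepsilon(\eta_h^{\mathbf{u}}(t))\|^2+\|\eta_h^q(t)\|^2\le Ch^2(\|\mathbf{u}(t)\|_2^2+\|q(t)\|_1^2)$. For the elliptic projection onto $P_1$, the standard estimate $\|(I-Q_h^p)p\|_1\le Ch\|p\|_2$ immediately gives $\kappa\|\nabla\eta_h^p(t)\|^2\le Ch^2\|p(t)\|_2^2$.

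Second, to handle the integral term, I would use that both $Q_h^q$ and $Q_h^p$ are linear time-independent operators, so $\eta_{ht}^q=(I-Q_h^q)q_t$ and $\eta_{ht}^p=(I-Q_h^p)p_t$. Applying the Stokes projection approximation result to $(\mathbf{u}_t,q_t)$ for the first (keeping only the $L^2$-part on $q_t$, which requires $q_t\in H^1$) and the standard $L^2$ bound $\|(I-Q_h^p)p_t\|\le Ch\|p_t\|_1$ for the second, together with the triangle inequality,
\[
\|\eta^q_{ht}-\eta^p_{ht}\|^2\le 2\|\eta^q_{ht}\|^2+2\|\eta^p_{ht}\|^2\le Ch^2\bigl(\|q_t\|_1^2+\|p_t\|_1^2\bigr),
\]
which after integration in time produces the claimed bound $\int_0^t h^2(\|q_t\|_1^2+\|p_t\|_1^2)\,ds$.

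Third, I would substitute these four bounds into the right-hand sides of \eqref{FEM1_semi_erreq3.0} and \eqref{FEM1_semi_erreq3.1}, which yields exactly the stated estimates. The only real obstacle is to make sure the cited approximation property of the Stokes projection does give an $L^2$ bound of order $h$ on $(I-Q_h^q)q_t$ (not just an $\|\cdot\|_1$ bound on the displacement component), so that the time-derivative term carries only $h^2$ and requires only $q_t\in L^2(0,\bar T;H^1)$; this is why the regularity hypotheses in the theorem are stated precisely as $\mathbf{u}\in L^\infty(0,\bar T;[H^2]^d)$, $q\in L^\infty(0,\bar T;H^1)$, $p\in L^\infty(0,\bar T;H^2)$, and $q_t,p_t\in L^2(0,\bar T;H^1)$. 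Once those are in place, the argument is a direct substitution and a triangle inequality, giving the two inequalities of the theorem.
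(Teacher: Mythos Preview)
Your proposal is correct and mirrors the paper's approach exactly: the paper does not give a separate proof but simply states that the result follows by combining Theorem~\ref{FEM1_th1-semi} with the Stokes-projection estimate $\|(I-Q_h^{\mathbf u})\mathbf u\|_1^2+\|(I-Q_h^q)q\|^2\le Ch^2(\|\mathbf u\|_2^2+\|q\|_1^2)$ from \cite{murad1996} and the standard properties of the elliptic projection. Your write-up is in fact more explicit than the paper about the time-derivative step (using linearity of the projections to get $\eta_{ht}^q=(I-Q_h^q)q_t$, $\eta_{ht}^p=(I-Q_h^p)p_t$) and about the potential subtlety in bounding $\|(I-Q_h^q)q_t\|$ from $\|q_t\|_1$ alone; the paper passes over this point without comment.
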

Similarly, we rewrite the error estimates of the fully-discrete scheme in Theorem \ref{FEM1_th2-fully} with $([P_2]^d,P_0,P_1)$ elements.
\begin{theorem} \label{FEM1_th2-fully_p2p0p1}
Let $(\mathbf{u},~q,~p)$ and $(\mathbf{u}^n,~q^n,~p^n)$ be the solutions of \eqref{weakform_FEM1} and \eqref{FEM1_fully}. Assume $\mathbf{u}\in L^{\infty}(0,\bar{T}; [H^2(\Omega)]^d)$, $\mathbf{u}_{tt}\in L^2(0,\bar{T}; [H^1(\Omega)]^d)$,
 $q\in L^{\infty}(0,\bar{T}; H^1(\Omega))$,  $q_t\in L^2(0,\bar{T}; H^1(\Omega))$, $q_{tt}\in L^2(0,\bar{T}; L^2(\Omega))$, $p\in L^{\infty}(0,\bar{T}; H^2(\Omega))$,  $p_t\in L^2(0,\bar{T}; H^1(\Omega))$, $p_{tt}\in L^2(0,\bar{T}; L^2(\Omega))$, then there exists a constant such that for $t^N\in(0,\bar{T}]$
\begin{equation*}
\begin{aligned}
\mu\Vert\varepsilon(e_{\mathbf{u}}^N)\Vert^2+\Vert e_q^N\Vert^2
\leq& C\Big(h^2\Vert \mathbf{u}(t^N)\Vert_2^2+h^2\Vert q(t^N)\Vert_1^2+\tau h^2 \int_{0}^{t^N}(\Vert q_{t}\Vert_1^2+\Vert  p_{t}\Vert_1^2)ds\\
&+\tau^3\int_{0}^{t^N}(\Vert q_{tt}\Vert^2+\Vert p_{tt}\Vert^2)ds+\tau^2\int_{0}^{t^N}\Vert \mathbf{u}_{tt}\Vert_1^2ds\Big),\\
\kappa\Vert\nabla e_p^N\Vert^2\leq& C\Big(h^2\Vert p(t^N)\Vert_2^2+h^2 \int_{0}^{t^N}(\Vert q_{t}\Vert_1^2+\Vert  p_{t}\Vert_1^2)ds\\
&+\tau^2\int_{0}^{t^N}(\Vert q_{tt}\Vert^2+\Vert p_{tt}\Vert^2+\Vert \mathbf{u}_{tt}\Vert_1^2)ds\Big).
\end{aligned}
\end{equation*}
\end{theorem}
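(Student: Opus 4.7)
The plan is to apply Theorem \ref{FEM1_th2-fully} as a black box and translate the right-hand sides of \eqref{FEM1_fully-err-eq1}--\eqref{FEM1_fully-err-eq2} into concrete convergence rates in $h$ using the approximation properties of the projections $Q_h^{\mathbf{u}}, Q_h^q, Q_h^p$ on the chosen $([P_2]^d,P_0,P_1)$ spaces. To legitimately invoke Theorem \ref{FEM1_th2-fully}, I first observe that the $P_2$--$P_0$ pair satisfies the discrete inf-sup hypothesis \eqref{inf-sup-discrete}, which is a standard Stokes-type result recorded in \cite{Brezzi1991}. I then rely on the coupled Stokes-projection bound
\[
\Vert(I-Q_h^{\mathbf{u}})\mathbf{u}\Vert_1^2 + \Vert(I-Q_h^q)q\Vert^2 \leq Ch^2(\Vert\mathbf{u}\Vert_2^2 + \Vert q\Vert_1^2)
\]
quoted from Lemma 3.1 of \cite{murad1996}, together with the usual $H^1$ elliptic projection bound $\Vert(I-Q_h^p)p\Vert_1^2 \leq Ch^2\Vert p\Vert_2^2$.

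Evaluating at $t=t^N$ immediately controls the static terms: using $\Vert\varepsilon(\mathbf{v})\Vert\leq\vert\mathbf{v}\vert_1$, the Stokes bound yields $\mu\Vert\varepsilon(\eta_{\mathbf{u}}^N)\Vert^2+\Vert\eta_q^N\Vert^2\leq Ch^2(\Vert\mathbf{u}(t^N)\Vert_2^2+\Vert q(t^N)\Vert_1^2)$, while the elliptic bound gives $\kappa\Vert\nabla\eta_p^N\Vert^2\leq Ch^2\Vert p(t^N)\Vert_2^2$. For the time-integral contributions $\int_0^{t^N}(\Vert\eta_{ht}^q\Vert^2+\Vert\eta_{ht}^p\Vert^2)\,ds$, the key observation is that $Q_h^q$ and $Q_h^p$ are linear operators that do not depend on $t$; differentiating the projection identities \eqref{stoke_proj}--\eqref{elliptic_proj} in $t$ then yields $\eta_{ht}^q=(I-Q_h^q)q_t$ and $\eta_{ht}^p=(I-Q_h^p)p_t$. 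Applying the same approximation estimates pointwise in $t$ and integrating gives $\int_0^{t^N}(\Vert\eta_{ht}^q\Vert^2+\Vert\eta_{ht}^p\Vert^2)\,ds\leq Ch^2\int_0^{t^N}(\Vert q_t\Vert_1^2+\Vert p_t\Vert_1^2)\,ds$, which matches the target right-hand side and uses precisely the $L^2(0,\bar{T};H^1)$ regularity of $q_t$ and $p_t$ assumed in the hypotheses.

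Substituting these four inequalities into \eqref{FEM1_fully-err-eq1} and \eqref{FEM1_fully-err-eq2} produces the two claimed estimates, since the $\tau^2$ and $\tau^3$ terms in the abstract statement already involve only $L^2$-norms of $\mathbf{u}_{tt},q_{tt},p_{tt}$ and are carried over unchanged. The argument is essentially bookkeeping and carries no serious analytical obstacle; the only mild subtlety worth highlighting is the commutativity of $\partial_t$ with the time-independent projections, which is exactly what links the $\eta_{ht}$ terms in the abstract estimate to the $H^1$-in-space regularity of the time derivatives of $q$ and $p$.
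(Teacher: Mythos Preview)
Your proposal is correct and follows essentially the same route as the paper: the paper simply remarks that the $P_2$--$P_0$ pair is inf-sup stable \cite{Brezzi1991}, records the Stokes-projection bound from Lemma~3.1 of \cite{murad1996} together with the standard elliptic-projection estimate, and then states that Theorem~\ref{FEM1_th2-fully} ``rewrites'' as the present theorem. Your write-up is in fact more explicit than the paper's, spelling out the commutation of $\partial_t$ with the time-independent projections to handle the $\eta_{ht}^q,\eta_{ht}^p$ terms, which the paper leaves implicit.
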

\subsubsection{Lowest Taylor-Hood elements with the continuous total stress are chosen for $k=2, ~l=1$}
Consider the spaces,
\begin{equation*}
\begin{aligned}
 \mathbf{U}_{h}:=&\{\mathbf{v}\in[H^1_{0,D}(\Omega)]^d\cap[C^0(\Omega)]^d:\mathbf{v}\mid_{T}\in[{P}_{2}(T)]^d, \forall T \in \mathcal{T}_{h}\},\\
 Z_{h}:=&\{q\in L^2(\Omega) \cap C^0(\Omega): q\mid_{T}\in{P}_{1}(T), \forall T \in \mathcal{T}_{h}\},\\
P_{h}:=&\{p\in H^1_{0,D}(\Omega)\cap C^0(\Omega):p\mid_{T}\in{P}_{1}(T), \forall T \in \mathcal{T}_{h}\}.\\
 \end{aligned}
\end{equation*}
It is well known that the space pair $(\mathbf{U}_h, Z_h)$ satisfies the inf-sup condition \eqref{inf-sup-discrete}  (see \cite{Girault1979}). With the use of the properties of stoke projection (see Lemma 3.1 in \cite{murad1996}), it follows
\begin{equation*}
\begin{aligned}
\Vert(I-Q_h^{\mathbf{u}})\mathbf{u} \Vert_1^2+\Vert(I-Q_h^{q})q \Vert^2 \leq Ch^4(\Vert \mathbf{u} \Vert_3^2+\Vert q \Vert_2^2 ).
 \end{aligned}
\end{equation*}
The convergence orders for the fully-discrete scheme with $([P_2]^d,P_1,P_1)$ elements will be verified in numerical examples. Theorem \ref{FEM1_th2-fully} is rewritten as follows.

\begin{theorem} \label{FEM1_th2-fully_p2p1p1}
Let $(\mathbf{u},~q,~p)$ and $(\mathbf{u}^n,~q^n,~p^n)$ be the solutions of \eqref{weakform_FEM1} and \eqref{FEM1_fully}, Assume $\mathbf{u}\in L^{\infty}(0,\bar{T}; [H^3(\Omega)]^d)$, $\mathbf{u}_{tt}\in L^2(0,\bar{T}; [H^1(\Omega)]^d)$,
 $q\in L^{\infty}(0,\bar{T}; H^2(\Omega))$,  $q_t\in L^2(0,\bar{T}; H^1(\Omega))$, $q_{tt}\in L^2(0,\bar{T}; L^2(\Omega))$, $p\in L^{\infty}(0,\bar{T}; H^2(\Omega))$,  $p_t\in L^2(0,\bar{T}; H^1(\Omega))$, $p_{tt}\in L^2(0,\bar{T}; L^2(\Omega))$, then there exists a constant such that for $t^N\in(0,\bar{T}]$
\begin{equation*}
\begin{aligned}
\mu\Vert\varepsilon(e_{\mathbf{u}}^N)\Vert^2+\Vert e_q^N\Vert^2
\leq& C\Big(h^4\Vert \mathbf{u}(t^N)\Vert_3^2+h^4\Vert q(t^N)\Vert_2^2+\tau h^4 \int_{0}^{t^N}(\Vert q_{t}\Vert_1^2+\Vert  p_{t}\Vert_1^2)ds\\
&+\tau^3\int_{0}^{t^N}(\Vert q_{tt}\Vert^2+\Vert p_{tt}\Vert^2)ds+\tau^2\int_{0}^{t^N}\Vert \mathbf{u}_{tt}\Vert_1^2ds\Big),\\
\kappa\Vert\nabla e_p^N\Vert^2\leq& C\Big(h^2\Vert p(t^N)\Vert_2^2+ h^4 \int_{0}^{t^N}(\Vert q_{t}\Vert_1^2+\Vert  p_{t}\Vert_1^2)ds\\
&+\tau^2\int_{0}^{t^N}(\Vert q_{tt}\Vert^2+\Vert p_{tt}\Vert^2+\Vert \mathbf{u}_{tt}\Vert_1^2)ds\Big).
\end{aligned}
\end{equation*}
\end{theorem}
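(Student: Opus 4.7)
The plan is to read the error estimate straight off Theorem~\ref{FEM1_th2-fully} by substituting in the approximation properties of the Taylor--Hood triple $(\mathbf{U}_h,Z_h,P_h)=([P_2]^d,P_1,P_1)$; the time-stepping and energy work is already absorbed into \eqref{FEM1_fully-err-eq1}--\eqref{FEM1_fully-err-eq2}, so nothing new has to be proved, only substituted.

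First, I would verify the inf-sup stability \eqref{inf-sup-discrete} for the continuous $(P_2,P_1)$ pair via the classical Taylor--Hood argument of \cite{Girault1979}, which permits the use of Theorem~\ref{FEM1_th2-fully}. Next, the quoted Stokes projection estimate $\Vert(I-Q_h^{\mathbf{u}})\mathbf{u}\Vert_1^2+\Vert(I-Q_h^q)q\Vert^2\le Ch^4(\Vert\mathbf{u}\Vert_3^2+\Vert q\Vert_2^2)$ controls $\mu\Vert\varepsilon(\eta_{\mathbf{u}}^N)\Vert^2+\Vert\eta_q^N\Vert^2$ at the terminal time, and, since the Stokes projection is linear and time-independent and therefore commutes with $\partial_t$, the identical inequality applied to $(\mathbf{u}_t,q_t)$ dominates the $\int\Vert\eta_{ht}^q\Vert^2\,ds$ contribution. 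For the elliptic projection onto the continuous $P_1$ pressure space, the standard Aubin--Nitsche pair of bounds $\Vert\nabla(I-Q_h^p)p\Vert^2\le Ch^2\Vert p\Vert_2^2$ and $\Vert(I-Q_h^p)p\Vert^2\le Ch^4\Vert p\Vert_2^2$ handle $\kappa\Vert\nabla\eta_p^N\Vert^2$ and, again by commuting with time differentiation, the $\int\Vert\eta_{ht}^p\Vert^2\,ds$ term. Plugging these four bounds into \eqref{FEM1_fully-err-eq1}--\eqref{FEM1_fully-err-eq2} produces the claimed estimates, with the $\tau^3$ and $\tau^2$ temporal contributions transferred verbatim from the abstract theorem.

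The only delicate point is regularity bookkeeping: an honest $O(h^4)$ bound on $\Vert\eta_{ht}^q\Vert$ and $\Vert\eta_{ht}^p\Vert$ wants $q_t,p_t\in H^2$ in space, whereas the hypotheses list only $L^2(0,\bar T;H^1)$ in space. In carrying out the plan I would either tacitly strengthen those hypotheses to $H^2$ (consistent with the elliptic regularity of the underlying coupled problem) or accept the weaker $O(h^2)$ rate on these lower-order derivative terms, which is in any case dominated by the leading $h^4$ spatial contribution and leaves the stated overall convergence order unchanged. No serious obstacle arises, since Theorem~\ref{FEM1_th2-fully} has already absorbed all of the energy-style arguments.
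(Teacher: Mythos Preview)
Your proposal is correct and matches the paper's approach exactly: the paper presents this theorem simply as ``Theorem~\ref{FEM1_th2-fully} rewritten'' for the Taylor--Hood pair, with the only new ingredient being the Stokes-projection bound $\Vert(I-Q_h^{\mathbf{u}})\mathbf{u}\Vert_1^2+\Vert(I-Q_h^{q})q\Vert^2\le Ch^4(\Vert\mathbf{u}\Vert_3^2+\Vert q\Vert_2^2)$ and the standard elliptic-projection estimates, all substituted into \eqref{FEM1_fully-err-eq1}--\eqref{FEM1_fully-err-eq2}. Your remark about the regularity mismatch in the $\Vert\eta_{ht}^q\Vert,\Vert\eta_{ht}^p\Vert$ terms is apt and is in fact a slight imprecision in the paper's own statement; the paper does not address it.
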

\begin{remark}\label{rem01}
From Theorem \ref{FEM1_th2-fully_p2p0p1} and Theorem \ref{FEM1_th2-fully_p2p1p1},  we know that the convergence orders for $\Vert\varepsilon(e_{\mathbf{u}}^N)\Vert$ and $\Vert e_q^N\Vert$ are $O(\tau+h)$ and  $O(\tau+h^2)$ with $([P_2]^d,P_0,P_1)$ elements and $([P_2]^d,P_1,P_1)$ elements, respectively. Moreover, the convergence $O(\tau+h)$ for pressure $\Vert \nabla e_p^N\Vert $ are obtained in two lowest finite elements.
\end{remark}
\begin{remark}\label{rem02}
The results for the lowest Taylor-Hood finite elements in Theorem \ref{FEM1_th2-fully_p2p1p1} can be extended to higher orders elements directly with the properties of the stokes projection and elliptic projection. In general, the convergence in Theorem \ref{FEM1_th2-fully_p2p0p1} and Theorem \ref{FEM1_th2-fully_p2p1p1} can be extended to many finite elements spaces pairs which satisfy the inf-sup conditions \eqref{inf-sup-discrete} with the properties of projections.
\end{remark}
\section{Numerical experiments}\label{sec:4}
The numerical examples are presented to confirm our convergence orders with respect to mesh size $h$ and time step $\tau$ for $([P_2]^d,P_0,P_1)$ elements and $([P_2]^d,P_1,P_1)$ elements for the benchmark problems. Let us define the linear Lagrange interpolation  for pressure by $I_h^p$. For simplicity, we still denote the error between linear Lagrange interpolation and numerical solutions by $e_p^n=I_h^p p(t^n)-p^n$ in the following tables.  The error estimates are divided into two parts, i.e. 
$$p(t^n)-p^n=p(t^n)-I_h^p p(t^n)+I_h^p p(t^n)-p^n. $$ 
Note that, we can obtain the convergence orders for $p(t^n)-p^n$ as $e_p^n$ based on the properties of Lagrange interpolation \cite{Quarteroni2008}. The errors for $\mathbf{u}$ and $q$ are defined in a similar way.
We denote the error in energy norm for the displacement $\mathbf{u}^n$ by $|||e_\mathbf{u}^n|||^2=||\varepsilon(e_\mathbf{u}^n)||^2$.
For the square domain, the uniform triangular mesh is employed, and Figure \ref{fig01} shows the computational grid with the meshes. 

\begin{figure}[!th]
\renewcommand{\captionfont}{\footnotesize}
\centering
\subfigure[~]{
\begin{minipage}{6.0cm}
\centering
\includegraphics[width = 4.5cm,height =4cm]{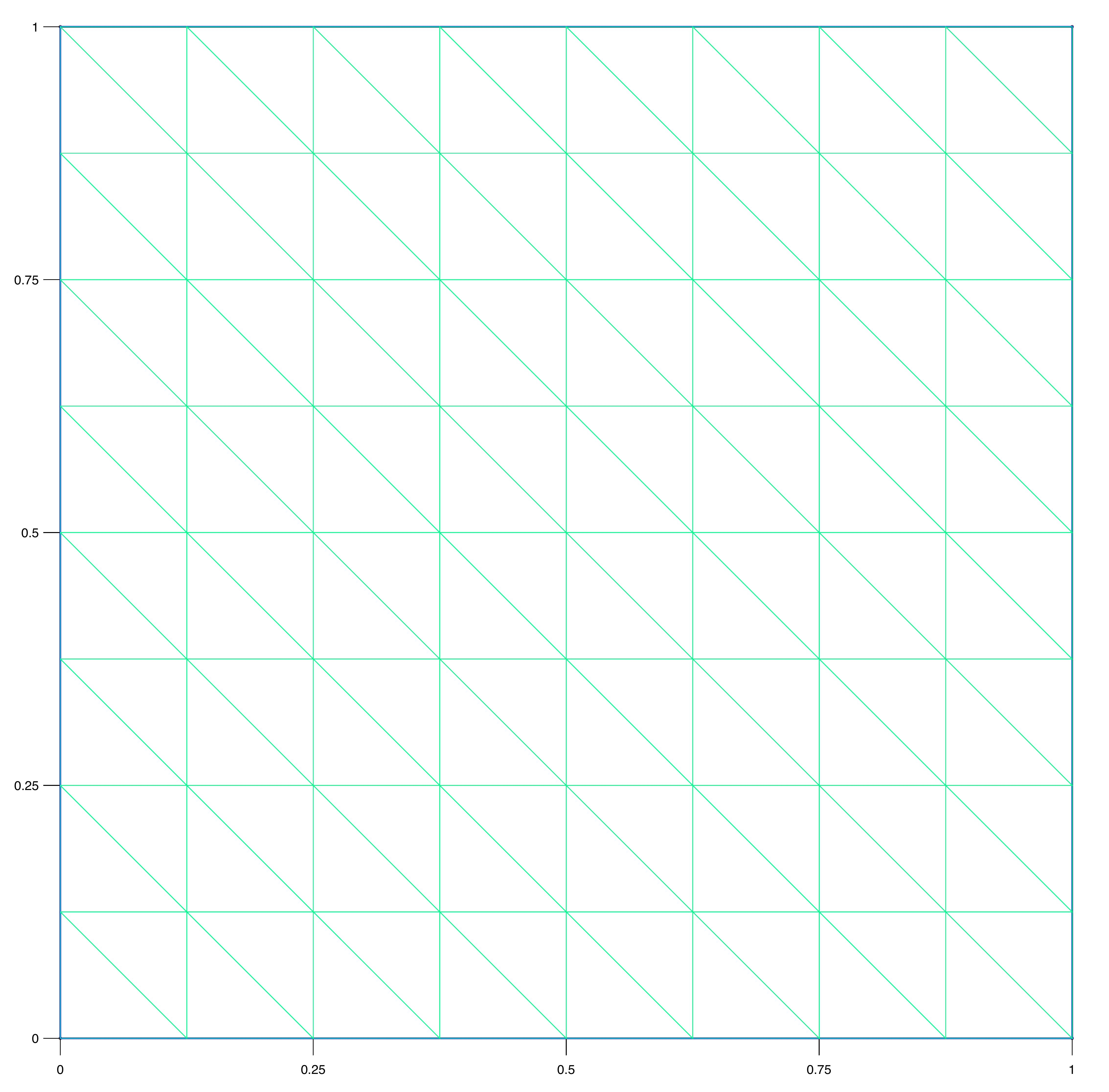}
\end{minipage}
}
\subfigure[~]{
\begin{minipage}{6cm}
\centering
\includegraphics[width = 4.5cm,height =4cm]{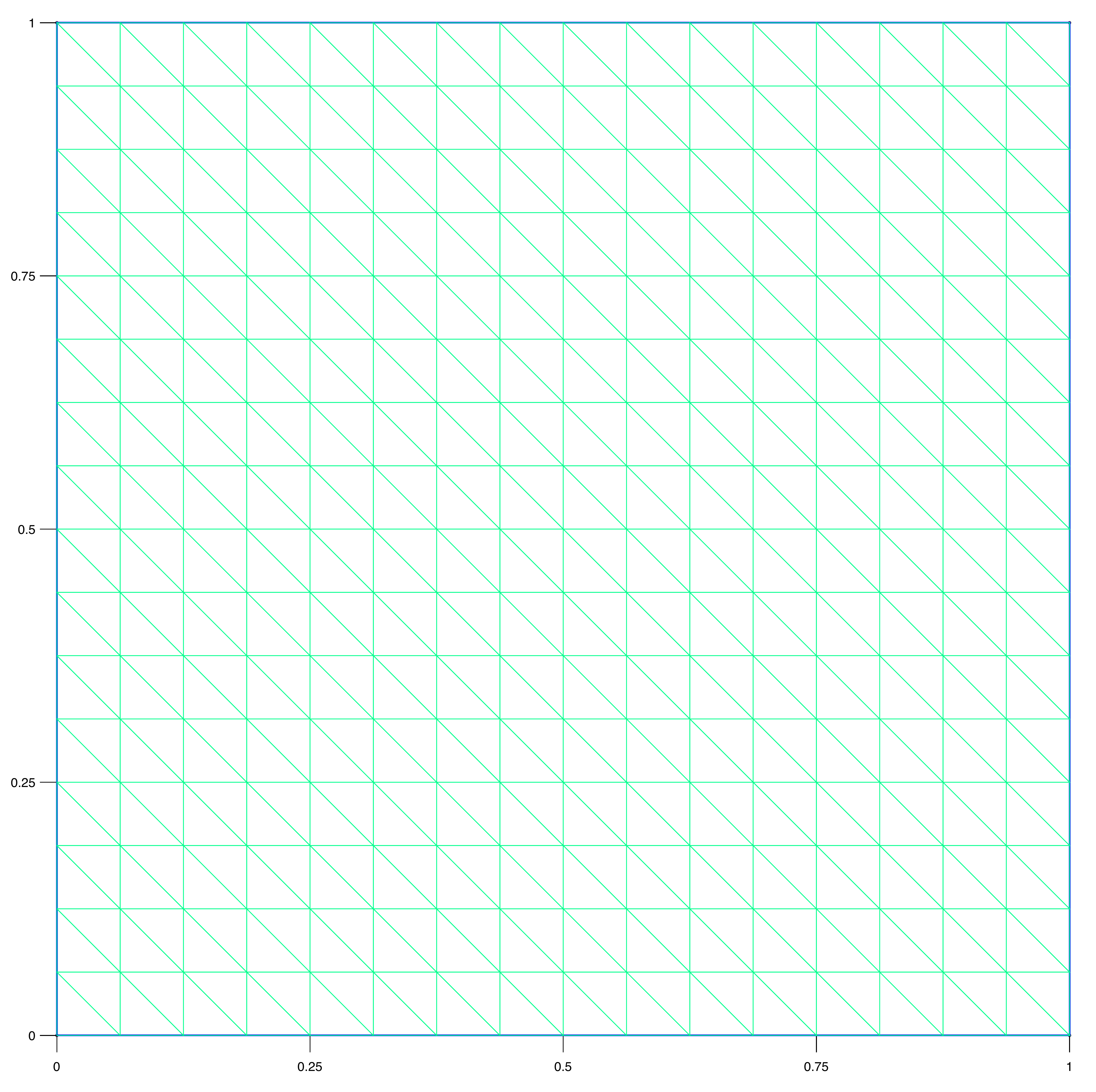}
\end{minipage}
}
\caption{The space meshes: (a) h=1/8, (b) h=1/16 }
\label{fig01}
\end{figure}

\begin{example}\label{ex:01}
Let the domain be $\Omega=(0,1)^2$ and $\bar{T}=1$. Then, we choose the Dirichlet boundary $\Gamma_D=\partial \Omega$ and the exact solutions 
\begin{equation*}
\mathbf{u}={t^2
\left[\begin{array}{c}
\sin(\pi x)\cos(\pi y)\\
\cos(\pi x)\sin(\pi y)
\end{array}
\right]
}
\end{equation*}
and 
\begin{equation*}
p=e^{-t}\sin(\pi x)\sin(\pi y).
\end{equation*}
The body force function $\mathbf{f}$, the forced fluid $g$, initial conditions and Dirichlet boundary conditions are determined by the exact solutions. We obtain 
$$\nabla\cdot\mathbf{u}=2 \pi t^2\cos(\pi x)\cos(\pi y).$$
\end{example}

We set the parameters $\kappa=1.0$, $\mu=1.0$ and $\lambda=10^{-2}$ in this example. Table \ref{TT01} gives the error estimates with respect to $h$ for $([P_2]^2,P_0,P_1)$ elements.
The convergence orders for each variable are consistent with our theory. Table \ref{TT02} shows the convergence orders with  $([P_2]^2,P_1,P_1)$ elements, which are  in agreements with 
our results in Theorem \ref{FEM1_th2-fully_p2p1p1}. Furthermore, the convergence orders with respect to $\tau$ can be observed in Table \ref{TT03} and Table \ref{TT04}.

 Moreover, the convergence orders for the pressure $\Vert\nabla e_p^n\Vert$ are computed to be $O(h^2)$ in Table \ref{TT01} and Table \ref{TT02},
 so we can obtain convergence results $O(h)$ for $\Vert \nabla(p(t^n)-p^n)\Vert$ due to the properties of the linear Lagrange interpolation.
A similar argument can be employed for the pressure in $H^1$ norm as well. In particular, the convergence orders of $O(h^2)$ for the displacement $\mathbf{u}$ and the pressure in $L^2$ norm can be observed in Tables \ref{TT01} and \ref{TT02}. 

\begin{table}[!th]
\renewcommand{\captionfont}{\footnotesize}
  \centering
   \resizebox{\textwidth}{!}{
  \small
  \begin{tabular}{cccccccccccc}
  \hline\noalign{\smallskip}
  \multicolumn{1}{c}{h}&\multicolumn{2}{c}{$|||e_\mathbf{u}^n|||$}
    &\multicolumn{2}{c}{$||e_\mathbf{u}^n||$}
      &\multicolumn{2}{c}{$||e_q^n||$}
      &\multicolumn{2}{c}{$||\nabla e_p^n||$}
      &\multicolumn{2}{c}{$||e_p^n||$}\\
\cline{2-3}\cline{4-5}\cline{6-7}\cline{8-9}\cline{10-11}\noalign{\smallskip}
 &error  &order &error  &order &error  &order &error  &order&error  &order\\
  \hline
  1/8     &1.2572e-02  &~~~~~ & 4.1887e-04   &~~~~~  &1.0502e-02&~~~~~  &7.8321e-02&~~~~~ &1.6727e-02  &~~~~~ \\
 1/16  &5.7283e-03  &1.1340 &9.7376e-05   &2.1048 &2.5910e-03&2.0190 &1.9241e-02 &2.0252&4.1523e-03  &2.0101   \\
 1/32  &2.8055e-03 &1.0298  &2.3932e-05   &2.0246 &6.4557e-04&2.0048 &4.7886e-03 &2.0065&1.0362e-03  &2.0026 \\
  1/64  &1.3961e-03 &1.0068 &5.9561e-06   &2.0065&1.6128e-04&2.0010 &1.1959e-03 &2.0015&2.5896e-04   &2.0005 \\
\hline
  \end{tabular}
  }
 \caption{Convergence at $t^n=1$ when $\tau=h^2$ for $([P_2]^2,P_0,P_1)$: Dirichlet boundary }
 \label{TT01}
\end{table}

\begin{table}[!th]
\renewcommand{\captionfont}{\footnotesize}
  \centering
   \resizebox{\textwidth}{!}{
  \small
  \begin{tabular}{cccccccccccc}
  \hline\noalign{\smallskip}
  \multicolumn{1}{c}{h}&\multicolumn{2}{c}{$|||e_\mathbf{u}^n|||$}
    &\multicolumn{2}{c}{$||e_\mathbf{u}^n||$}
      &\multicolumn{2}{c}{$||e_q^n||$}
      &\multicolumn{2}{c}{$||\nabla e_p^n||$}
      &\multicolumn{2}{c}{$||e_p^n||$}\\
\cline{2-3}\cline{4-5}\cline{6-7}\cline{8-9}\cline{10-11}\noalign{\smallskip}
 &error  &order &error  &order &error  &order &error  &order&error  &order\\
  \hline
 1/8    &3.8777e-03  &~~~~~ &3.0217e-04   &~~~~~ &2.8315e-03&~~~~~ &1.0661e-02 &~~~~~&2.3541e-03 &~~~~~   \\
 1/16  &6.8421e-04 &2.5026  &7.7262e-05   &1.9675 &7.2470e-04&1.9661 &2.6829e-03 &1.9904&6.0092e-04  &1.9699 \\
 1/32  &1.4486e-04 &2.2397 &1.9575e-05   &1.9807&1.8225e-04&1.9914 &6.7188e-04 &1.9975&1.5103e-04   &1.9923 \\
 1/64  &3.4293e-05  &2.0786 & 4.9123e-06 &1.9945 &4.5631e-05&1.9978 &1.6804e-04 &1.9993 &3.7807e-05 &1.9981 \\
\hline
  \end{tabular}
  }
 \caption{Convergence at $t^n=1$ when $\tau=h^2$ for $([P_2]^2P_1,P_1)$: Dirichlet boundary}
 \label{TT02}
\end{table}

\begin{table}[!th]
\renewcommand{\captionfont}{\footnotesize}
  \centering
  \resizebox{\textwidth}{!}{
  \small
  \begin{tabular}{cccccccccccc}
  \hline\noalign{\smallskip}
  \multicolumn{1}{c}{$\tau$}&\multicolumn{2}{c}{$|||e_\mathbf{u}^n|||$}
    &\multicolumn{2}{c}{$||e_\mathbf{u}^n||$}
      &\multicolumn{2}{c}{$||e_q^n||$}
      &\multicolumn{2}{c}{$||\nabla e_p^n||$}
      &\multicolumn{2}{c}{$||e_p^n||$}\\
\cline{2-3}\cline{4-5}\cline{6-7}\cline{8-9}\cline{10-11}\noalign{\smallskip}
 &error  &order &error  &order &error&order &error  &order \\
  \hline
 1   &3.8459e-03   &~~~~~ &2.4103e-03&~~~~~  &1.5573e-01&~~~~~ &3.3603e-01 &~~~~~&1.5887e-01 &~~~~~  \\
 1/2  &1.9797e-03 &0.9580 &1.2156e-03&0.9875 &7.8538e-02 &0.9875&1.6971e-01 &0.9855&8.0238e-02 &0.9854  \\
 1/4  &1.0634e-03 &0.8965 &6.0752e-04&1.0006 &3.9277e-02 &0.9997&8.5072e-02 &0.9963 &4.0241e-02 &0.9956 \\
  1/8  &6.5722e-04 &0.6942 &3.0332e-04&1.0020 &1.9639e-02 &0.9999&4.2737e-02 &0.9931&2.0245e-02 &0.9911 \\
\hline
  \end{tabular}
  }
 \caption{Convergence at $t^n=1$ when $h=1/64$ for $([P_2]^2,P_0,P_1)$: Dirichlet boundary}
 \label{TT03}
\end{table}

\begin{table}[!th]
\renewcommand{\captionfont}{\footnotesize}
  \centering
  \resizebox{\textwidth}{!}{
  \small
  \begin{tabular}{cccccccccccc}
  \hline\noalign{\smallskip}
  \multicolumn{1}{c}{$\tau$}&\multicolumn{2}{c}{$|||e_\mathbf{u}^n|||$}
    &\multicolumn{2}{c}{$||e_\mathbf{u}^n||$}
      &\multicolumn{2}{c}{$||e_q^n||$}
      &\multicolumn{2}{c}{$||\nabla e_p^n||$}
      &\multicolumn{2}{c}{$||e_p^n||$}\\
\cline{2-3}\cline{4-5}\cline{6-7}\cline{8-9}\cline{10-11}\noalign{\smallskip}
 &error  &order &error  &order &error&order &error  &order \\
  \hline
 1   &1.2052e-02   &~~~~~ &1.7118e-03&~~~~~  &2.9200e-02&~~~~~ &2.7565e-01 &~~~~~&2.9310e-02 &~~~~~  \\
 1/2  &6.0467e-03 &0.9950 &8.5831e-04&0.9959&1.4653e-02 &0.9947&1.3831e-01 &0.9949&1.4713e-02 &0.9943  \\
 1/4  &3.0175e-03 &1.0027 &4.2802e-04&1.0038 &7.3185e-03 &1.0015&6.9128e-02 &1.0005 &7.3538e-03 &1.0005 \\
  1/8  &1.5027e-03 &1.0057 &2.1286e-04&1.0077 &3.6508e-03 &1.0033&3.4532e-02 &1.0013&3.6736e-03 &1.0012\\
\hline
  \end{tabular}
  }
 \caption{Convergence at $t^n=1$ when $h=1/64$ for $([P_2]^2,P_1,P_1)$: Dirichlet boundary}
 \label{TT04}
\end{table}

\begin{example}\label{ex:01.1}
We consider the problem as in Example \ref{ex:01}  with Neumann and Dirichlet boundary conditions. Let the Neumann boundary be $\Gamma_N=\{(x, y)\mid x=1,  0<y<1\}$ and
the Dirichlet boundary $\Gamma_D=\partial \Omega \setminus \Gamma_N$. 
\end{example}

We also set the parameters $\kappa=1.0$, $\mu=1.0$ and $\lambda=10^{-2}$ as in Example \ref{ex:01}. With the Neumann and Dirichlet boundary conditions, we observe the convergence orders in energy norm of the displacement in Table \ref{TT05} and  Table \ref{TT06}  which are in full agreements with our main results in Theorem \ref{FEM1_th2-fully_p2p0p1} and Theorem  \ref{FEM1_th2-fully_p2p1p1}. The convergence orders $O(h^2)$ for the total stress in $L^2$ norm are recorded in Table \ref{TT05} and  Table \ref{TT06} which confirm our theory. 

\begin{table}[!th]
\renewcommand{\captionfont}{\footnotesize}
  \centering
   \resizebox{\textwidth}{!}{
  \small
  \begin{tabular}{cccccccccccc}
  \hline\noalign{\smallskip}
  \multicolumn{1}{c}{h}&\multicolumn{2}{c}{$|||e_\mathbf{u}^n|||$}
    &\multicolumn{2}{c}{$||e_\mathbf{u}^n||$}
      &\multicolumn{2}{c}{$||e_q^n||$}
      &\multicolumn{2}{c}{$||\nabla e_p^n||$}
      &\multicolumn{2}{c}{$||e_p^n||$}\\
\cline{2-3}\cline{4-5}\cline{6-7}\cline{8-9}\cline{10-11}\noalign{\smallskip}
 &error  &order &error  &order &error  &order &error  &order&error  &order\\
  \hline
 1/8    &1.4182e-02&~~~~  &1.7411e-03&~~~~   &1.2276e-02&~~~~   &8.0666e-02 &~~~~  &1.8625e-02 &~~~~   \\
 1/16  &5.9152e-03&1.2615 &4.1226e-04&2.0783 &3.0872e-03&1.9914 &2.0045e-02 &2.0087 &4.6735e-03&1.9946\\
 1/32  &2.8262e-03&1.0655 &9.9754e-05&2.0471  &7.6655e-04&2.0098 &4.9844e-03 &2.0077&1.1632e-03&2.0064  \\
 1/64  &1.3985e-03&1.0149  &2.4612e-05&2.0190  &1.9134e-04&2.0022 &1.2446e-03 &2.0017&2.9050e-04&2.0014\\
\hline
  \end{tabular}
  }
 \caption{Convergence at $t^n=1$ when $\tau=h^2$ for  $([P_2]^2,P_0,P_1)$: Neumann and Dirichlet boundary}
 \label{TT05}
\end{table}

\begin{table}[!th]
\renewcommand{\captionfont}{\footnotesize}
  \centering
   \resizebox{\textwidth}{!}{
  \small
  \begin{tabular}{cccccccccccc}
  \hline\noalign{\smallskip}
  \multicolumn{1}{c}{h}&\multicolumn{2}{c}{$|||e_\mathbf{u}^n|||$}
    &\multicolumn{2}{c}{$||e_\mathbf{u}^n||$}
      &\multicolumn{2}{c}{$||e_q^n||$}
      &\multicolumn{2}{c}{$||\nabla e_p^n||$}
      &\multicolumn{2}{c}{$||e_p^n||$}\\
\cline{2-3}\cline{4-5}\cline{6-7}\cline{8-9}\cline{10-11}\noalign{\smallskip}
 &error  &order &error  &order &error  &order &error  &order&error  &order\\
  \hline
1/8     &6.0055e-03&~~~~   &6.2900e-04&~~~~   &2.5456e-03 &~~~~   &1.5738e-02 &~~~~  &2.2821e-03  &~~~~     \\
1/16   &1.1116e-03&2.4336&1.6328e-04&1.9457 &6.5647e-04&1.9914 &3.9863e-03  &1.9811&5.8439e-04&1.9653 \\
1/32  &2.2324e-04&2.3159 &4.1244e-05&1.9850 &1.6542e-04&2.0098 &9.9993e-04 &1.9951&1.4701e-04 &1.9910  \\
1/64  &4.8554e-05&2.2009  &1.0324e-05&1.9981 &4.1437e-05&2.0022 &2.5020e-04 &1.9987&3.6809e-05&1.9977\\
\hline
  \end{tabular}
  }
 \caption{Convergence at $t^n=1$ when $\tau=h^2$ for  $([P_2]^2,P_1,P_1)$: Neumann and Dirichlet boundary}
 \label{TT06}
\end{table}

\begin{example}\label{ex:02}
Let the domain be $\Omega=(0,1)^2$ and $\bar{T}=1$. The Dirichlet boundary satisfy $\Gamma_D=\partial \Omega$.  Then, we choose the exact solutions 
\begin{equation*}
\mathbf{u}=e^{-t}{
\left[\begin{array}{c}
\sin(2\pi y)(-1+\cos(2\pi x))+\frac{1}{\mu+\lambda}\sin(\pi x)\sin(\pi y)\\
\sin(2\pi x)(1-\cos(2\pi y))+\frac{1}{\mu+\lambda}\sin(\pi x)\sin(\pi y)
\end{array}
\right]
}
\end{equation*}
and 
\begin{equation*}
p=e^{-t}\sin(\pi x)\sin(\pi y).
\end{equation*}
 We choose the body force function $\mathbf{f}$, forced fluid $g$, initial conditions and Dirichlet  boundary conditions so that  the exact solutions satisfy the problems \eqref{eq:problem1.1} and  \eqref{eq:problem1.2} .
Note that $\nabla\cdot\mathbf{u}= \pi e^{-t}\sin(\pi (x+y))/(\mu+\lambda)$, and when the elastic Lam$\acute{e}$ parameters $\lambda\rightarrow \infty$, we have $\nabla\cdot\mathbf{u}\rightarrow 0$.
\end{example}

We set the parameters $\kappa=1.0$, $\mu=1.0$ and $\lambda=10^4$ with nearly incompressible case. We test both $([P_2]^2,P_0,P_1)$ and $([P_2]^2,P_1,P_1)$ elements
 to emphasize the efficiency.  Thus, Table \ref{TT07} and Table \ref{TT08} show the convergence orders for each variable as expected.  At the same time, the convergence orders $O(h^2)$ for the total stress $\Vert e_q^n\Vert$ are shown in  Table \ref{TT07},  
 and we can observe that the convergence orders of displacement $|||e_\mathbf{u}^n|||$ are $O(h^3)$ in Table \ref{TT08} which is better than expected. Therefore, we can conclude that the finite element method is valid for the nearly incompressible case.
 
\begin{table}[!th]
\renewcommand{\captionfont}{\footnotesize}
  \centering
  \resizebox{\textwidth}{!}{
  \small
  \begin{tabular}{cccccccccccc}
  \hline\noalign{\smallskip}
  \multicolumn{1}{c}{h}&\multicolumn{2}{c}{$|||e_\mathbf{u}^n|||$}
    &\multicolumn{2}{c}{$||e_\mathbf{u}^n||$}
      &\multicolumn{2}{c}{$||e_q^n||$}
      &\multicolumn{2}{c}{$||\nabla e_p^n||$}
      &\multicolumn{2}{c}{$||e_p^n||$}\\
\cline{2-3}\cline{4-5}\cline{6-7}\cline{8-9}\cline{10-11}\noalign{\smallskip}
 &error  &order &error  &order &error  &order&error  &order&error  &order \\
  \hline
1/8     &4.8359e-02&~~~~   &1.1460e-03&~~~~ &1.0101e-02&~~~~   &1.0998e-02 &~~~~   &2.3094e-03&~~~~  \\
1/16    &1.9701e-02&1.2955&2.0102e-04&2.5111  &2.6388e-03&1.9365 &2.8017e-03 &1.9728&5.9488e-04&1.9568  \\
1/32  &9.7854e-03&1.0095 &4.9100e-05&2.0335&7.3649e-04&1.8411 &7.0387e-04 &1.9929  &1.4987e-04&1.9888 \\
1/64  &4.9240e-03&0.9908 &1.2386e-05&1.9870 &2.1109e-04&1.8028 &1.7619e-04  &1.9981&3.7540e-05&1.9972\\
\hline
  \end{tabular}
  }
 \caption{Convergence at $t^n=1$ when $\tau=h$ for  $([P_2]^2,P_0,P_1)$: Nearly incompressible case}
 \label{TT07}
\end{table}

\begin{table}[!th]
\renewcommand{\captionfont}{\footnotesize}
  \centering
   \resizebox{\textwidth}{!}{
  \small
  \begin{tabular}{cccccccccccc}
  \hline\noalign{\smallskip}
  \multicolumn{1}{c}{h}&\multicolumn{2}{c}{$|||e_\mathbf{u}^n|||$}
    &\multicolumn{2}{c}{$||e_\mathbf{u}^n||$}
      &\multicolumn{2}{c}{$||e_q^n||$}
      &\multicolumn{2}{c}{$||\nabla e_p^n||$}
      &\multicolumn{2}{c}{$||e_p^n||$}\\
\cline{2-3}\cline{4-5}\cline{6-7}\cline{8-9}\cline{10-11}\noalign{\smallskip}
 &error  &order &error  &order &error  &order&error  &order&error  &order \\
  \hline
1/8     &3.0635e-02&~~~~   &9.8803e-04&~~~~ &1.7083e-02&~~~~   &1.0998e-02 &~~~~   &2.3094e-03&~~~~  \\
1/16    &4.3311e-03&2.8223&6.9922e-05&3.8207  &3.0093e-03&2.5050 &2.8017e-03 &1.9728&5.9488e-04&1.9568 \\
1/32  &5.6515e-04&2.9380 &4.5564e-06&3.9397&7.0999e-04&2.0835 &7.0388e-04 &1.9929 &1.4987e-04&1.9888 \\
1/64  &7.1810e-05&2.9763 &2.8910e-07&3.9782 &1.7635e-04&2.0093 &1.7619e-04  &1.9981&3.7540e-05&1.9972\\
\hline
  \end{tabular}
  }
 \caption{Convergence at $t^n=1$ when $\tau=h$ for  $([P_2]^2,P_1,P_1)$: Nearly incompressible case}
 \label{TT08}
\end{table}

\begin{example}\label{ex:02.1}
We consider Example \ref{ex:02} with non-homogeneous Dirichlet boundary conditions. Let the domain be $\Omega=(0,1)^2$ and $\bar{T}=1$. The Dirichlet boundary satisfy $\Gamma_D=\partial \Omega$.  Then, we choose the exact solutions 
\begin{equation*}
\mathbf{u}=e^{-t}{
\left[\begin{array}{c}
\sin(2 y)(-1+\cos(2 x))+\frac{1}{\mu+\lambda}\sin( x)\sin( y)\\
\sin(2 x)(1-\cos(2 y))+\frac{1}{\mu+\lambda}\sin( x)\sin( y)
\end{array}
\right]
}
\end{equation*}
and 
\begin{equation*}
p=e^{-t}\sin( x)\sin(y).
\end{equation*}
Again, we choose the body force function $\mathbf{f}$, forced fluid $g$, initial conditions and Dirichlet boundary conditions so that  the exact solutions satisfy the problems \eqref{eq:problem1.1} and  \eqref{eq:problem1.2} .
Note that $\nabla\cdot\mathbf{u}= e^{-t}\sin( (x+y))/(\mu+\lambda)$, and when the elastic Lam$\acute{e}$ parameters $\lambda\rightarrow \infty$, we have $\nabla\cdot\mathbf{u}\rightarrow 0$.
\end{example}

We also set the parameters $\kappa=1.0$, $\mu=1.0$ and $\lambda=10^4$. The results in Table \ref{TT09} and Table \ref{TT010} verify our theory for the case with non-homogeneous Dirichlet boundary conditions.

\begin{table}[!th]
\renewcommand{\captionfont}{\footnotesize}
  \centering
   \resizebox{\textwidth}{!}{
  \small
  \begin{tabular}{cccccccccccc}
  \hline\noalign{\smallskip}
  \multicolumn{1}{c}{h}&\multicolumn{2}{c}{$|||e_\mathbf{u}^n|||$}
    &\multicolumn{2}{c}{$||e_\mathbf{u}^n||$}
      &\multicolumn{2}{c}{$||e_q^n||$}
      &\multicolumn{2}{c}{$||\nabla e_p^n||$}
      &\multicolumn{2}{c}{$||e_p^n||$}\\
\cline{2-3}\cline{4-5}\cline{6-7}\cline{8-9}\cline{10-11}\noalign{\smallskip}
 &error  &order &error  &order &error  &order&error  &order&error  &order \\
  \hline
1/8     &2.5465e-03&~~~~     &5.9338e-05&~~~~    &1.1059e-03&~~~~.    &1.6621e-04   &~~~~  &3.6140e-05&~~~~   \\
1/16   &1.3791e-03&0.8847  &1.7079e-05&1.7967 &3.4468e-04&1.6818 &4.2208e-05   &1.9774 &9.2862e-06 &1.9604\\
1/32   &7.1729e-04&0.9430  &4.6074e-06&1.8901 &1.0408e-04&1.7275 &1.0556e-05   &1.9994 &2.3293e-06&1.9951\\
1/64   &3.6553e-04&0.9725  &1.1977e-06&1.9436 &3.0911e-05&1.7515  &2.6206e-06  &2.0100  &5.7865e-07&2.0091  \\
\hline
  \end{tabular}
  }
 \caption{Convergence at $t^n=1$ when $\tau=h$ for  $([P_2]^2,P_0,P_1)$: Nearly incompressible case with non-homogeneous Dirichlet boundary conditions}
 \label{TT09}
\end{table}

\begin{table}[!th]
\renewcommand{\captionfont}{\footnotesize}
  \centering
   \resizebox{\textwidth}{!}{
  \small
  \begin{tabular}{cccccccccccc}
  \hline\noalign{\smallskip}
  \multicolumn{1}{c}{h}&\multicolumn{2}{c}{$|||e_\mathbf{u}^n|||$}
    &\multicolumn{2}{c}{$||e_\mathbf{u}^n||$}
      &\multicolumn{2}{c}{$||e_q^n||$}
      &\multicolumn{2}{c}{$||\nabla e_p^n||$}
      &\multicolumn{2}{c}{$||e_p^n||$}\\
\cline{2-3}\cline{4-5}\cline{6-7}\cline{8-9}\cline{10-11}\noalign{\smallskip}
 &error  &order &error  &order &error  &order&error  &order&error  &order \\
  \hline
1/8     &2.8381e-04&~~~~  &9.5582e-06&~~~~    &4.3295e-04&~~~~   &1.6621e-04   &~~~~&3.6141e-05&~~~~   \\
1/16   &3.8805e-05&2.8706 &6.4511e-07&3.8891 &1.0424e-04&2.0542 &4.2209e-05   &1.9773 &9.2864e-06&1.9604 \\
1/32   &5.0463e-06&2.9429 &4.1637e-08 &3.9536 &2.5944e-05&2.0064 &1.0556e-05   &1.9994 &2.3293e-06&1.9952\\
1/64   &6.4255e-07&2.9733  &2.6417e-09&3.9783 &6.4796e-06&2.0014 &2.6206e-06   &2.0100 &5.7866e-07&2.0091   \\
\hline
  \end{tabular}
  }
 \caption{Convergence at $t^n=1$ when $\tau=h$ for  $([P_2]^2,P_1,P_1)$: Nearly incompressible case with non-homogeneous Dirichlet boundary conditions}
 \label{TT010}
\end{table}

\section{Conclusions}\label{sec:5}
For Biot's consolidation model, we analyze the error estimates for a three field discretization.
And the total stress coupling between the solid and fluid variable is as an unknown. 
Moreover, the convergence of fully-discrete scheme is presented and the results can be extended to many finite element spaces that satisfy the inf-sup conditions.

\newpage


\begin{thebibliography}{99}
\bibitem{Biot1941}
 {M.A. Biot}, 
 \emph{General theory of three-dimensional consolidation},
  {J. Appl. Phys.} vol. {12}(1941)pp. 155-164.

\bibitem{Biot1955} 
{M. A. Biot},
 \emph{ Theory of Elasticity and Consolidation for a Porous Anisotropic Solid},
  {J. Appl. Phys.} vol.{26}(1955)pp. 182-185.

\bibitem{Soltan2007} 
{H. Soltanzadeh, C.D. Hawkes and J.S. Sharma},
  \emph{Poroelastic model for production-and injection-induced stresses in reservoirs with elastic properties different from the surrounding rock},
 {International Journal of Geomechanics} vol. {7}(2007)pp. 353-361.

\bibitem{Nagashima1990}
  {T. Nagashima, T. Shirakuni, S. I. Rapoport}, 
  \emph{A two-dimensional, finite element analysis of vasogenic brain edema}, 
  {Neurol. med. chir.} vol. {30}(1990)pp. 1-9.

\bibitem{Swan2003} 
{C. C. Swan, R. S. Lakes,   R. A. Brand, K. Stewart}, 
 \emph{Micromechanically based poroelastic modeling of fluid flow in Haversian bone},
{J. Biomech. Eng.} vol. {125}(2003)pp. 25-37.

\bibitem{Lewis1993} 
 {R. W. Lewis, Y. Sukirman}, 
 \emph {Finite element modelling of three‐phase flow in deforming saturated oil reservoirs}, 
{Intl. J. for Num. and Anal. Methods in Geomech.} vol. {17}(1993)pp.  577-598.

\bibitem{Netti1997} 
{P. A. Netti, L. T. Baxter, Y. Boucher,  R. Skalak, R. K. Jain}, 
\emph{Macro-and microscopic fluid transport in living tissues: Application to solid tumors},
{AIChE Journal of Bioengineering Food, and Natural Products} vol. {43}(1997))pp. 818-834.

\bibitem{Showalter2000} 
{R. E.  Showalter},
 \emph{Diffusion in poro-elastic media},
{J. Math. Anal. Appl.} vol. {251}(2000)pp. 310-340.

\bibitem{Leiderman2006} 
{R. Leiderman, P. Barbone, A. Oberai, and J. Bamber}, 
\emph{Coupling between elastic strain and interstitial fluid flow: ramifications for poroelastic imaging},
{Phys. Med. Biol.} vol. {51}(2006)pp. 6291-6313.

\bibitem{Yokoo1971} 
{Y. Yokoo, K. Yamagata, H. Nagaoka}, 
\emph{Finite element method applied to Biot's consolidation theory}, 
{Soils and Foundations} vol. {11}(1971)pp. 29-46.

\bibitem{zenisek1984} 
{A. $\check{Z}$en$\acute{i}$$\check{s}$ek}, 
\emph{The existence and uniqueness theorem in Biot's consolidation theory},
{Aplikace matematiky} vol. {29}(1984)pp. 194-211.

\bibitem{Murad1992} 
 {M. A. Murad, A. F. D. Loula},
  \emph{Improved accuracy in finite element analysis of Biot's consolidation problem},
{Compur. Methods Appl. Mech. Eng.} vol. {95}(1992)pp. 359-382.

\bibitem{Murad1994} 
{M. A. Murad, A. F. D. Loula}, 
\emph{On stability and convergence of finite element approximations of Biot's consolidation problem},
{Internat. J. Numer. Methods Engrg.} vol. {37}(1994)pp. 645-667.

\bibitem{murad1996} 
{M. A. Murad, V. Thom$\acute{e}$e, A.F.D.Loula}, 
 \emph{Asymptotic behavior of semidiscrete finite-element approximations of Biot’s consolidation problem},
{SIAM J. Numer. Anal.} vol. {33}(1996)pp. 1065 -1083.

\bibitem{Korsawe2005} 
{J. Korsawe, G. Starke},
 \emph{A least-squares mixed finite element method for Biot's consolidation problem in porous media},
  {SIAM J. NUMER. ANAL.} vol. {43}(2005)pp. 318-339.

\bibitem{Phillips2007}  
{P. J. Phillips,  M. F. Wheeler},  
 \emph{A coupling of mixed and continuous Galerkin finite element methods for poroelasticity I: the continuous in time
case}
{Comput. Geosci.} vol. {11}(2007)pp. 131-144.

\bibitem{Phillips2008}  
{P. J. Phillips,  M. F. Wheeler},  
\emph{A coupling of mixed and discontinuous Galerkin finite-element methods for poroelasticity},
{Comput. Geosci.} vol. {12} (2008)pp. 417-435.

\bibitem{CHEN2013} 
{Y. Chen , Y. Luo , M. Feng},
 \emph{ Analysis of a discontinuous Galerkin method for the Biot’s consolidation problem},
 {Appl. Math. Comput.} vol. {219}(2013)pp. 9043-9056.

\bibitem{Wheeler2014} 
 {M. Wheeler, G. Xue , I. Yotov}, 
\emph {Coupling multipoint flux mixed finite element methods with continuous Galerkin methods for poroelasticity},
  {Computational Geosciences} vol. {18}(2014)pp. 57-75.

\bibitem{sonyi2014}
{ S.-Y.Yi}, 
\emph{Convergence analysis of a new mixed finite element method for Biot’s consolidation model},
  {Numer. Meth. Part. Differ. Equ.} vol. {30}(2014)pp. 1189–1210.

\bibitem{sonyi2017} 
{S.-Y. Yi}, 
 \emph{A study of two modes of locking in poroelasticity},
{SIAM J. Numer. Anal.} vol. {55}(2017)pp. 1915-1936.

\bibitem{LEE2016} 
{J. J. Lee},  
\emph{Robust error analysis of coupled mixed methods for Biot’s consolidation model}, 
{J. Sci. Comput.} vol. {69}(2016)pp. 610-632.

\bibitem{Hu2017} 
{X. Hu,  C. Rodrigo, F. J. Gaspar, L. T. Zikatanov}, 
\emph{A nonconforming finite element method for the Biot’s consolidation model in poroelasticity}, 
{J. Comput. Appl. Math.} vol. {310}(2017)pp. 143-154.

\bibitem{Hong2018} 
{Q. Hong, J. Kraus}, 
\emph{Parameter-robust stability of classical three-field formulation of Biot's consolidation model},
{Electron. Trans. Numer. Anal.} vol. {48}(2018)pp. 202–226.

\bibitem{Chen2018}  
{G. Chen, M. Feng},
 \emph{Stabilized Finite Element Methods for Biot’s Consolidation Problems Using Equal Order Elements},
{Adv. Appl. Math. Mech.} vol. {10}(2018)pp. 77-99.

\bibitem{Kanschat2018} 
{G. Kanschat, B. Riviere}, 
 \emph{A finite element method with strong mass conservation for Biot’s linear consolidation model},
{J. Sci. Comput.} vol. {77}(2018)pp. 1762-1779.

\bibitem{Oyarzua2016}
 {R. Oyarz$\acute{u}$a, R. Ruiz-Baier}, 
  \emph{Locking-free finite element methods for poroelasticity}, 
{SIAM J. Numer. Anal.} vol. {54}(2016)pp. 2951-2973.

\bibitem{Lee2017} 
{J. J. Lee, K. A. Mardal, R. Winther}, 
\emph{Parameter-robust discretization and preconditioning of Biot's consolidation model},
{SIAM J. Sci. Comput.} vol. {39} (2017)pp. A1-A24.

\bibitem{Lee2019} 
{J. J. Lee, E. Piersanti, K. A. Mardal, M. E.Rognes},
 \emph{A mixed finite element method for nearly incompressible multiple-network poroelasticity},
{SIAM J. Sci. Comput.} vol. {41} (2019)pp. A722-A747.

\bibitem{Brenner2008}  
{S. C. Brenner, L. R. Scott},
  \emph{The mathematical theory of finite element methods}. 
  Springer Science \& Business Media, Third edition. 2007.

\bibitem{Ciarlet1978}  
{P. G. Ciarlet}, 
\emph{The finite element method for elliptic problems},
 North-Holland, New York, 1978.

\bibitem{Nitsche1981}  
{J.A. Nitsche}, 
\emph{On Korn's second inequality},
{RAIRO Anal. Numer.} vol. {15}(1981)pp. 237-248.

\bibitem{Brenner2004} 
{S. C. Brenner},
  \emph{Korn's inequalities for piecewise $H^1$ vector fields},
 {Math. Comput.} vol. {73}(2004)pp. 1067-1087.

\bibitem{Quarteroni2008} 
{A. Quarteroni, A. Valli}, 
\emph{Numerical Approximation of Partial Differential Equations},
Springer Science \& Business Media. 2008.

\bibitem{Girault1979} 
{V. Girault and P.-A. Raviart},
 \emph{Finite Element Approximation of the Navier–Stokes Equations},
  Lecture Notes in Math. 749, Springer-Verlag, Berlin, New York, 1979.

\bibitem{Brezzi1991} 
 {F. Brezzi, M. Fortin},  
 \emph{Mixed and Hybrid Finite Element Methods},
 Springer Science \& Business Media, 1991.

\end{thebibliography}
\end{document}